\documentclass[12pt]{amsart}
\usepackage{amscd,amsmath,amsthm,amssymb,verbatim,enumerate}
\usepackage{color}
\usepackage{amsfonts,latexsym,amsthm,amssymb,amsmath,amscd,euscript}
\usepackage{tikz-cd}
\usepackage[margin = 2cm]{geometry}
\usepackage[shortlabels]{enumitem}
\usepackage[utf8]{inputenc}
\usepackage{hyperref}
\usepackage{arydshln}
\usepackage{tikz}
\usepackage{float}
\usepackage{graphicx}
\usepackage[ruled, vlined]{algorithm2e}

\textwidth=16.00cm
\textheight=22.00cm
\topmargin=0.00cm
\oddsidemargin=0.00cm
\evensidemargin=0.00cm
\headheight=0cm
\headsep=1cm
\headsep=0.5cm 
\numberwithin{equation}{section}
\hyphenation{semi-stable}
\setlength{\parskip}{3pt}
\newcommand{\nc}{\newcommand}

\nc{\R}{\mathbb R}
\nc{\C}{\mathbb C}
\nc{\F}{\mathbb F}
\nc{\Q}{\mathbb Q}
\nc{\Z}{\mathbb Z}
\nc{\N}{\mathbb N}
\nc{\V}{\mathcal{V}}
\nc{\reg}{reg}

\newtheorem{theorem}{Theorem}[section]
\newtheorem{lemma}[theorem]{Lemma}

\theoremstyle{definition}
\newtheorem{definition}[theorem]{Definition}
\newtheorem{remark}[theorem]{Remark}
\newtheorem{example}[theorem]{Example}

\newtheorem{observation}[theorem]{Observation}

\begin{document}

\title{Down-left graphs and a connection to
toric ideals of graphs}

\author[Biermann]{Jennifer Biermann}
\address{Department of Mathematics and Computer Science, Hobart and William Smith Colleges \\
Geneva, NY 14456}
\email{biermann@hws.edu}

\author[Castellano]{Beth Anne Castellano}
\address{Department of Mathematics, Dartmouth College, Hanover, NH 03755}
\email{elizabeth.a.castellano.gr@dartmouth.edu}

\author[Manivel]{Marcella Manivel}
\address{Department of Mathematics, University of Minnesota, Minneapolis, MN 55455}
\email{
maniv013@umn.edu}

\author[Petruccelli]{Eden Petruccelli}
\address{Department of Mathematics and Statistics\\
McMaster University, Hamilton, ON, L8S 4L8}
\curraddr{31 Lower Horning Rd, Hamilton, ON L8S 3E9}
\email{Eden.petruccelli@gmail.com}

\author[Van Tuyl]{Adam Van Tuyl}
\address{Department of Mathematics and Statistics\\
McMaster University, Hamilton, ON, L8S 4L8}
\email{vantuyl@math.mcmaster.ca}

\keywords{edge ideals, regularity, vertex decomposable, toric ideals of graphs}
\subjclass[2000]{Primary: 13F55 Secondary: 13D02, 13H02, 14M25}
\date{\today}

\begin{abstract}
We introduce a family of graphs, which we call
down-left graphs, and study their 
combinatorial and algebraic properties.
We show that members of this 
family are well-covered, $C_5$-free, and vertex decomposable.   
By applying a result of H\`a-Woodroofe and 
Moradi--Khosh-Ahang, the (Castelnuovo-Mumford) regularity of the associated edge ideals is the induced  matching number of the graph.   As an application,
we  give a combinatorial
interpretation for the regularity of the toric
ideals of chordal bipartite graphs
that are $(K_{3,3} \setminus e)$-free.
\end{abstract}
\maketitle

\section{Introduction}
Let $G = (V,E)$ be a finite simple graph with
vertex set $V = \{x_1,\ldots,x_n\}$ and edge set
$E = \{e_1,\ldots,e_d\}$ ($G$ is simple if it has
no loops or multiple edges).   Starting
with work of Villarreal \cite{V1}, there 
has been much interest in studying graphs 
through the lens of commutative algebra.  The bridge
between the two fields is via the edge ideal
construction, that is, given such a graph $G$,
the {\it edge ideal} is the square-free monomial
ideal $I(G) = \langle x_ix_j ~|~ \{x_i,x_j\} \in E \rangle$
in the polynomial ring $R = k[x_1,\ldots,x_n]$ with 
$k$ a field.  An active research program in 
combinatorial commutative algebra is to understand 
the interaction between the two subjects;
see, for example, \cite{HH,MRSW,VT,Vbook}.

The purpose of this paper is to introduce
and to study a family of graphs which we call
{\it down-left graphs}. This family was inspired by
a construction in
the paper of Biermann, O'Keefe, and Van Tuyl
\cite{BOVT} of graphs whose edge ideals arise as
the initial ideals of particular toric ideals.  While the formal definition (and generalization) will
be given in Definition \ref{defn:down-left}, we provide an
illustrative example that explains the name.
Let $1 \leq m \leq n$ be integers, and
let our vertex set be $V = \{x_{i,j} ~|~
1 \leq i \leq m, 1 \leq j \leq n \}$.  Arrange
our $mn$ vertices into an $m \times n$ grid, with vertex
$x_{i,j}$ in row $i$ and column $j$ (similar to matrix notation). We then attach vertex $x_{i,j}$ to 
all vertices
below and to the left of vertex $x_{i,j}$ in the grid.
The graph with $m =3$ and $n=4$, which
is denoted $G(3,4)$ in our notation, is shown in Figure \ref{downleftg34}.
\begin{figure}[h!]
\begin{tikzpicture}[scale=0.42]

\draw (0,0) -- (8,3);
\draw (0,0) -- (16,3);
\draw (0,0) -- (24,3);
\draw (0,0) -- (8,6);
\draw (0,0) to[out=35, in=190] (16,6);
\draw (8,0) to[out=35, in=190] (24,6);
\draw (0,0) -- (24,6);
\draw (8,0) -- (16,3);
\draw  (16,0) -- (24,3);
\draw (0,3) -- (8,6);
\draw (8,3) -- (16,6);
\draw (8,0) -- (24,3);
\draw (8,0) -- (16,6);
\draw (8,0) -- (24,6);
\draw  (16,3) -- (24,6);
\draw (16,0) -- (24,6);
\draw (0,3) -- (8,6);
\draw (0,3) -- (16,6);
\draw (0,3) -- (24,6);
\draw (8,3) -- (24,6);

\fill[fill=white,draw=black] (0,0) circle (.2) node[below]{$x_{3,1}$};
\fill[fill=white,draw=black] (0,3) circle (.2) node[left]{$x_{2,1}$};
\fill[fill=white,draw=black] (0,6) circle (.2) node[above]{$x_{1,1}$};
\fill[fill=white,draw=black] (8,0) circle (.2) node[below]{$x_{3,2}$};
\fill[fill=white,draw=black] (8,3) circle (.2) node[left]{$x_{2,2}$};
\fill[fill=white,draw=black] (8,6) circle (.2) node[above]{$x_{1,2}$};
\fill[fill=white,draw=black] (16,0) circle (.2) node[below]{$x_{3,3}$};
\fill[fill=white,draw=black] (16,3) circle (.2) node[right]{$x_{2,3}$};
\fill[fill=white,draw=black] (16,6) circle (.2) node[above]{$x_{1,3}$};
\fill[fill=white,draw=black] (24,0) circle (.2) node[below]{$x_{3,4}$};
\fill[fill=white,draw=black] (24,3) circle (.2) node[right]{$x_{2,4}$};
\fill[fill=white,draw=black] (24,6) circle (.2) node[above]{$x_{1,4}$};
\end{tikzpicture}
\caption{The down-left graph $G(3,4)$}\label{downleftg34}
\end{figure}
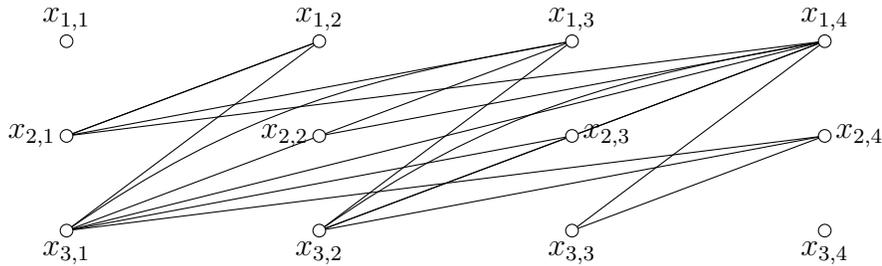
We will consider a more general family of graphs, denoted $G(m,n,\vec{a},\vec{b})$, where one is allowed to remove vertices of the grid from the lower-left and upper-right corners, as described by the vectors $\vec{a}$ and $\vec{b}$ (see Definition \ref{defn:generaldown-left}).

We prove that down-left graphs $G(m,n,\vec{a},\vec{b})$
satisfy a number of nice combinatorial proprieties,
which we now describe.  A graph is {\it well-covered}
if all of its minimal vertex covers have the 
same cardinality.  A graph is {\it $C_5$-free} if it contains
no induced subgraph isomorphic to a five cycle.  Finally,
a graph is {\it vertex decomposable} if it is the graph
of isolated vertices, or if it can be constructed
recursively from other vertex decomposable graphs (see
Definition \ref{defn:vd}).  
By combining our Theorems \ref{thm:wellcovered}, \ref{thm:downleft=>vd}, and \ref{5-cycle}  we prove:
\begin{theorem}\label{maintheorem1}
Let $G = G(m,n,\vec{a},\vec{b})$ be a down-left
graph.  Then the graph $G$ is well-covered, $C_5$-free, and 
vertex decomposable.
\end{theorem}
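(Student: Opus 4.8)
The plan is to establish the three asserted properties separately --- in our numbering these become Theorem~\ref{thm:wellcovered} ($G$ is well-covered), Theorem~\ref{5-cycle} ($G$ has no induced $C_5$), and Theorem~\ref{thm:downleft=>vd} ($G$ is vertex decomposable) --- so that Theorem~\ref{maintheorem1} follows by combining them. The structural fact underpinning all three is that $G = G(m,n,\vec a,\vec b)$ is the \emph{incomparability graph} of a poset: order the grid cells componentwise, setting $(i,j)\preceq(k,\ell)$ exactly when $i\le k$ and $j\le\ell$, let $P$ be the set of cells surviving the corner deletions, equipped with the induced partial order, and observe that $x_{i,j}x_{k,\ell}\in E(G)$ precisely when $(i,j)$ and $(k,\ell)$ are $\preceq$-incomparable. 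Hence the independent sets of $G$ are exactly the chains of $P$, and the non-edges of $G$ restricted to any vertex subset are exactly the comparabilities of the corresponding induced subposet.

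For the $C_5$-free statement, suppose vertices $v_1,\dots,v_5$ induced a $5$-cycle in $G$. The five non-edges among them form another $5$-cycle (the ``pentagram'' $v_1v_3v_5v_2v_4v_1$), so the comparability graph of the five-element induced subposet of $P$ on $\{v_1,\dots,v_5\}$ would be a $C_5$. But a $5$-cycle admits no transitive orientation: fixing a direction on one edge and forcing transitivity around the cycle produces a comparability between a pair of non-adjacent vertices, a contradiction. Thus $G$ has no induced $C_5$; equivalently, incomparability graphs of posets are perfect, so this part is essentially immediate from the poset description.

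For well-coveredness, the maximal independent sets of $G$ are the maximal chains of $P$. When $\vec a=\vec b=\vec 0$ the poset is the full grid $[m]\times[n]$, which is graded by $(i,j)\mapsto i+j$, so every maximal chain has $m+n-1$ elements and $G$ is well-covered. In general, a chain of $[m]\times[n]$ lying in $P$ that is saturated in $[m]\times[n]$ is still saturated in $P$ (the corner deletions only remove cells, they never create new covering relations), so it suffices to check --- using the staircase shape of the deleted lower-left and upper-right regions --- that all maximal chains of $P$ have the same length; this rank bookkeeping is the combinatorial core of this part. Dualizing, all minimal vertex covers of $G$ have the same cardinality.

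The vertex-decomposability statement is the deepest, and I would prove it by induction on $|V(G)|$. If $G$ has no edges --- equivalently $P$ is a chain --- it is vertex decomposable by definition; and since attaching or deleting an isolated vertex corresponds to coning the independence complex (respectively passing to a vertex link), it preserves vertex decomposability, so such vertices may be ignored. Otherwise I would select a vertex $v$ at the upper-right boundary of $P$ --- for instance the rightmost cell of the topmost nonempty row, which is then rightmost in its row and topmost in its column --- together with a neighbor $u$ lying just below and to the left of $v$, and verify the inclusion $N_G[u]\subseteq N_G[v]$. This inclusion makes $v$ a shedding vertex: any independent set disjoint from $N_G[v]$ is then disjoint from $N_G[u]$, hence can be enlarged by $u\in N_G(v)$. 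One then checks that $G\setminus v$ is again a down-left graph (one further upper-right cell removed, with the parameter vectors updated) and that $G\setminus N_G[v]$ is a down-left graph together with finitely many isolated vertices; both have fewer vertices, so by induction they are vertex decomposable, and therefore so is $G$. The main obstacle, where essentially all the genuine work lies, is making this last step work for \emph{every} admissible pair $(\vec a,\vec b)$: the staircase corners can render the naive choice $u=x_{i+1,j-1}$ unavailable, so $u$ (and if necessary $v$) must be chosen according to the local shape of $P$ near its upper-right boundary, or the inductive hypothesis strengthened, and one must verify carefully that $G\setminus v$ and $G\setminus N_G[v]$ remain inside the family $G(m,n',\vec{a}',\vec{b}')$. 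The well-covered step reduces to a similar but easier rank computation.
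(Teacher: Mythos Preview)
Your proposal is on the right track and essentially correct in outline, but it is worth noting where it aligns with and diverges from the paper.

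Your observation that $G(m,n,\vec a,\vec b)$ is the incomparability graph of an induced subposet of the grid $[m]\times[n]$ is a unifying structural insight the paper never states; it pays off most visibly in your $C_5$-free argument, which is genuinely different from (and cleaner than) the paper's. The paper proves $C_5$-freeness by a direct case analysis on the positions of five vertices relative to a chosen ``rightmost-uppermost'' one, whereas your argument that the complement $C_5$ admits no transitive orientation settles it in one line once the poset description is in hand.

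For well-coveredness and vertex decomposability your approach coincides with the paper's, modulo cosmetic choices. The paper proves well-coveredness by showing (its Lemma~\ref{eitheror}) that if $x_{i,j}$ lies in a maximal independent set and $(i,j)\neq(m,n)$, then exactly one of $x_{i+1,j},x_{i,j+1}$ does too; this is precisely the statement that $P$ inherits the grid's rank function, i.e.\ your ``rank bookkeeping,'' and it genuinely uses the inequalities $a_i+1<b_{i-1}$ --- your parenthetical that ``corner deletions never create new covering relations'' is exactly what needs to be checked and is where the constraints on $\vec a,\vec b$ enter. For vertex decomposability the paper also inducts on $|V(G)|$, but it sheds the \emph{bottom-most} vertex $x_{i,1}$ of the \emph{leftmost} column (largest $i$ with $a_i=0$) rather than your rightmost vertex of the top row; the two choices are related by the evident symmetry $(i,j)\mapsto(m{+}1{-}i,\,n{+}1{-}j)$ of the family. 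The paper does not invoke a dominating neighbour $u$ with $N[u]\subseteq N[v]$; because it works in the pure setting and has already established well-coveredness, it simply verifies that $G\setminus x$ and $G\setminus N[x]$ are again down-left graphs by writing down the modified vectors $\vec a',\vec b'$ explicitly. What you flag as ``the main obstacle'' --- confirming that both deletions stay inside the family --- is exactly the content of the paper's inductive step, and it goes through without needing to strengthen the hypothesis.
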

\noindent
Note that when $G$ is a vertex decomposable graph, 
the ring $R/I(G)$ is Cohen-Macaulay, e.g., see
\cite{DE,W}. While
we do not define the Cohen-Macaulay property here, 
Theorem \ref{maintheorem1} can be seen as part of the larger program of identifying
graphs with the property that $R/I(G)$ is
Cohen-Macaulay (for more see the survey \cite{MV}).

The results of Theorem \ref{maintheorem1} allow
us to apply a result of  H\`a-Woodroofe \cite{HW}
and Moradi--Khosh-Ahang 
\cite{MK} to compute the (Castelnuovo-Mumford)
regularity of the associated edge ideals of 
down-left graphs in terms of the
invariant ${\rm im}(G)$, the induced matching
number of $G$.

\begin{theorem}[{Theorem \ref{thm:reg-downleft}}]\label{maintheorem2}
If $G = G(m,n,\vec{a},\vec{b})$ is a down-left
graph, then  $${\rm reg}(R/I(G)) = {\rm im}(G).$$
\end{theorem}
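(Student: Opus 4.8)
The plan is to obtain Theorem \ref{thm:reg-downleft} as a formal consequence of Theorem \ref{maintheorem1} together with a known regularity bound, by squeezing ${\rm reg}(R/I(G))$ between two copies of ${\rm im}(G)$. On one side, for \emph{every} finite simple graph $G$ one has the lower bound ${\rm im}(G) \le {\rm reg}(R/I(G))$, a well-known inequality of Katzman obtained by locating the graded Betti numbers of $R/I(G)$ that are forced by an induced matching; this input needs nothing special about down-left graphs. All of the content therefore sits in the reverse inequality ${\rm reg}(R/I(G)) \le {\rm im}(G)$.

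For that bound I would feed the structural results already established into the cited theorem. By Theorem \ref{5-cycle} the down-left graph $G = G(m,n,\vec{a},\vec{b})$ has no induced five-cycle, and by Theorem \ref{thm:downleft=>vd} it is vertex decomposable; since a vertex decomposable complex is shellable, $R/I(G)$ is in particular sequentially Cohen-Macaulay. These are exactly the hypotheses under which the theorem of H\`a-Woodroofe \cite{HW} (see also Moradi--Khosh-Ahang \cite{MK}) yields ${\rm reg}(R/I(G)) \le {\rm im}(G)$. Combining this with Katzman's lower bound gives ${\rm reg}(R/I(G)) = {\rm im}(G)$, proving Theorems \ref{maintheorem2} and \ref{thm:reg-downleft}.

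I do not expect a genuine obstacle at this stage: the combinatorial difficulty has been front-loaded into Theorems \ref{thm:wellcovered}, \ref{thm:downleft=>vd}, and \ref{5-cycle}, and once those are in hand the regularity identity is essentially a one-line deduction. The points deserving a little care are bookkeeping ones: verifying that ``vertex decomposable'' is strong enough to trigger the cited result (it is, via sequential Cohen-Macaulayness), and noting that the $C_5$-freeness really is needed rather than decorative -- the five-cycle is the genuine obstruction, since $C_5$ is itself vertex decomposable yet ${\rm reg}(R/I(C_5)) = 2 > 1 = {\rm im}(C_5)$. One thing this theorem does \emph{not} require, but which the later application to toric ideals of chordal bipartite graphs will, is an explicit formula for ${\rm im}(G(m,n,\vec{a},\vec{b}))$; here we only need the identity ${\rm reg} = {\rm im}$ itself.
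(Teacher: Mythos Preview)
Your approach is valid and genuinely different from the paper's, though the citation needs care. The theorem you invoke---that a $C_5$-free vertex decomposable graph $G$ satisfies ${\rm reg}(R/I(G)) = {\rm im}(G)$---is due to Khosh-Ahang and Moradi (\emph{Proc.\ Amer.\ Math.\ Soc.}\ \textbf{142} (2014), 1567--1576); it is not what \cite{HW} or the paper's reference \cite{MK} actually provide, which is only the recursive identity recorded here as Theorem~\ref{thm.recursivereg}. With the correct source in hand, your deduction from Theorems~\ref{thm:downleft=>vd} and~\ref{5-cycle} together with Katzman's lower bound is indeed a two-line proof.

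The paper instead argues by induction on $|V(G)|$ via Theorem~\ref{thm.recursivereg}, and---notably---never uses the $C_5$-freeness of Theorem~\ref{5-cycle} at all. Taking the shedding vertex $x = x_{i,1}$ isolated in the proof of Theorem~\ref{thm:downleft=>vd}, induction gives ${\rm reg}(R/I(G)) = \max\{{\rm im}(G\setminus x),\,{\rm im}(G\setminus N[x])+1\}$, and the remaining inequality ${\rm im}(G\setminus N[x])+1 \le {\rm im}(G)$ is handled by an explicit observation specific to down-left graphs: the vertex $x_{i-1,2}$ lies in $G$, forms an edge with $x$, and satisfies $N[x_{i-1,2}] \subseteq N[x]$, so every induced matching of $G\setminus N[x]$ extends by $\{x,x_{i-1,2}\}$. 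Your route is shorter and explains why $C_5$-freeness was worth establishing; the paper's route is self-contained modulo the recursion and renders Theorem~\ref{5-cycle} logically unnecessary for the regularity identity.
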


As mentioned above, the original inspiration 
for down-left graphs came from the work
of Biermann, O'Keefe, and Van Tuyl \cite{BOVT} on  the regularity 
of toric ideals of chordal bipartite graphs,
that is, bipartite graphs with no induced
cycles of length six or larger. As an application
of Theorem \ref{maintheorem2}, we are able
to compute the regularity of toric ideas
of chordal bipartite graphs that are $(K_{3,3}\setminus
e)$-free (see Theorem \ref{thm:reg-chordalbipartite}). For this family of graphs,
this improves upon results of \cite{ADS,BOVT,HH1} which only give upper bounds for the regularity of 
toric ideals of (chordal) bipartite graphs
(see also \cite{B} which gives some exact formulas
in special cases).
Note that when $G$ is a bipartite graph, then 
computing ${\rm reg}(R/I_G)$, where $I_G$ is the 
toric ideal of $G$, is equivalent
to computing the $a$-invariant of $R/I_G$ (e.g., see \cite[Remark 2.12]{BVT}). While we 
omit this definition, the references
\cite{VV} and \cite[Section 11.5]{Vbook} 
can also be reinterpreted as results about the regularity of
toric ideals of bipartite graphs.

This paper uses the following
outline. In Section 2
we recall the relevant graph theory and commutative
algebra.  In Section 3 we derive our main
results about down-left graphs.  In our final section,
we apply our results to the toric ideals of chordal
bipartite graphs.

{\bf Acknowledgements.} This project started
as a Senior Research Project of 
Petruccelli at McMaster University under the
supervision of Van Tuyl.  The results of the project
were later expanded upon by Castellano and Manivel under the supervision
of Biermann as part of an REU supported by the National Science Foundation under grant no. DMS 1757616.
We thank Russ Woodroofe for his feedback
and Seyed Amin Seyed Fakhari for pointing out a 
gap in our original proof of Theorem \ref{thm:reg-downleft}.
Some of our results  were
inspired by computer calculations using Macaulay2 
\cite{Mt}.  Van Tuyl’s research is supported by NSERC Discovery Grant 2019-05412. 

\section{Preliminaries}

In this section, we recall the necessary background from graph theory and commutative algebra.  Throughout
this paper, $k$ denotes a field
of characteristic zero.

\subsection{Graph theory background}
Let $G = (V,E)$ denote a finite
simple graph.  We will write $V(G)=V$, respectively $E(G)=E$, if we need to highlight
that we are referring to the vertices,
respectively edges, of $G$.

Given a subset $W \subseteq V$,
the {\it induced graph} of $G$ on $W$, denoted
$G_W$, is the graph $(W,E(G_W))$ where
$E(G_W) = \{e \in E(G) ~|~ e \subseteq W\}$.
We  say that $G$ is {\it $H$-free} if there
is no induced subgraph of $G$ isomorphic 
to $H$.  Given any edge $e \in E$ of $G$, the graph with the edge $e$ removed is denoted
$G \setminus e$.   Similarly, if $x \in V$ is 
a vertex, we write $G \setminus x$ to mean
the graph with the vertex $x$ and all edges
containing $x$ removed from $G$.
Given a vertex $x \in V$,
the {\it neighborhood} of $x$ is $N(x) = 
\{y ~|~ \{x,y\} \in E\}$.  The 
{\it closed neighborhood} of $x$ is 
$N[x] = N(x) \cup \{x\}$. 
If a graph $G$ has isolated vertices, then
we write $G^\circ$ for the graph formed by
removing all  the isolated vertices 
of $G$.

We require the following graphs.
The {\it $n$-cycle} is the graph $C_n = (\{x_1,\ldots,x_n\}, E(C_n))$ where 
$E(C_n) = \{\{x_i,x_{i+1}\} ~|~ i \in \{1,\ldots,n-1\}\} \cup \{x_n,x_1\}\}$.  A graph $G$
is a {\it bipartite graph} if the vertex
set can be partitioned as $V = V_1 \cup V_2$
such that every $e \in E$ has the property that
$e \cap V_i \neq \emptyset$ for $i=1,2$.
The {\it complete bipartite graph} 
$K_{m,n}$ has vertex set $\{x_1,\ldots,x_m,
y_1,\ldots,y_n\}$ and edge set
$\{\{x_i,y_j\} ~|~ 1 \leq i \leq m,~
1 \leq j \leq n\}$.
A {\it chordal bipartite graph} is a 
bipartite graph with no induced
cycles of length $\geq 6$.  
In Figure \ref{pictures} we have drawn
$C_5$ and $K_{3,3} \setminus e$ (where $e$ denotes any edge of $K_{3,3}$).
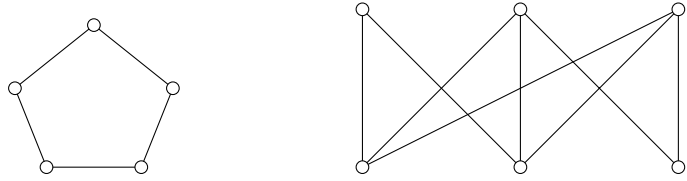
\begin{figure}
\begin{tikzpicture}[scale=0.42]

\draw (0,0) -- (3,0) -- (4,2.5) -- (1.5,4.5) -- (-1,2.5) -- (0,0);

\draw (10,0) -- (10,5) -- (15,0) -- (15,5) -- (20,0) -- (20,5) -- (15,0);
\draw (10,0) -- (15,5);
\draw (10,0) -- (20,5);

\fill[fill=white,draw=black] (0,0) circle (.2);
\fill[fill=white,draw=black] (3,0) circle (.2);
\fill[fill=white,draw=black] (4,2.5) circle (.2);
\fill[fill=white,draw=black] (1.5,4.5) circle (.2);
\fill[fill=white,draw=black] (-1,2.5) circle (.2);
\fill[fill=white,draw=black] (10,0) circle (.2);
\fill[fill=white,draw=black] (15,0) circle (.2);
\fill[fill=white,draw=black] (20,0) circle (.2);
\fill[fill=white,draw=black] (10,5) circle (.2);
\fill[fill=white,draw=black] (15,5) circle (.2);
\fill[fill=white,draw=black] (20,5) circle (.2);
\end{tikzpicture}
\caption{The graphs $C_5$ and $K_{3,3} \setminus e$}\label{pictures}
\end{figure}

A subset of vertices $W \subseteq V$ is 
a {\it vertex cover} if $e \cap W \neq \emptyset$ for all $e \in E$.
If $W$ is a vertex cover, the set complement
$V \setminus W$ is a called an {\it independent set}.
It has the property that  $e \not\subset (V\setminus W)$ for all $e \in E$.
A vertex cover $W$ is a {\it minimal vertex cover}
if no proper subset of $W$ is a vertex cover of $G$.
The complement of a minimal vertex cover is
called a {\it maximal independent set}.
A graph $G$ is {\it well-covered} if 
all of its minimal vertex covers (equivalently,
maximal independent sets) have the same
cardinality.  

A collection of edges $\{e_{i_1},\ldots,e_{i_k}\}$
is a {\it matching} in $G$ if the edges are pairwise
disjoint.  If $e_{i_j} = \{x_{i_j,1},x_{i_j,2}\}$,
then a matching is an {\it induced matching} of
$G$ if the induced graph on 
$\{x_{i_1,1},x_{i_1,2},\ldots,x_{i_k,1},x_{i_k,2}\}$
is the graph consisting only of the edges in
the matching.    The {\rm induced matching number} of
$G$, denoted ${\rm im}(G)$, is the number of edges
in the largest induced matching of $G$.

We shall also need the notion of a vertex
decomposable graph.

\begin{definition}\label{defn:vd}
A finite simple graph $G$ is a 
{\it vertex decomposable graph} if $G$ is 
well-covered and either $G$ consists only of
isolated vertices, or there exists a vertex
$x$ of $G$ such that $G \setminus x$ and 
$G \setminus N[x]$ are vertex decomposable.
The vertex $x$ is called
a {\it shedding vertex}.
\end{definition}

\begin{remark}
The notion of  vertex decomposability is originally due to Provan and Billera \cite{PB}, but
was defined for pure simplicial complexex, i.e., all
the facets of the simplicial complex have the same dimension.
The independence complex of $G$ is a simplicial
complex whose faces correspond to the 
independent sets of $G$. When we say a graph
$G$ is vertex decomposable, we are really 
saying that the independence complex of $G$
is a vertex decomposable simplicial complex with
respect to the definition of Provan and Billera.
Instead of defining the independence complex, we have
defined vertex decomposability directly in terms of
the graph.  The justification for this alternate 
formulation can be found in Dochtermann and Engstr\"om 
\cite{DE} and Woodroofe \cite{W}.  
Note that in \cite{DE,W}, the 
authors use the non-pure definition of 
vertex decomposability of Bj\"orners and Wach \cite{BW};  when the simplicial complexes
are pure, the definition reverts to that of 
\cite{PB}.  Pure independence complexes correspond
to well-covered graphs.
\end{remark}

To check if a graph
is vertex decomposable, it is enough to show that
all the connected components of $G$ have this property.

\begin{lemma}[{\cite[Lemma 20]{W}}]\label{components}
    Suppose that the graph 
    $G = G_1 \sqcup G_2$ is the disjoint union of the
    graphs $G_1$ and $G_2$.  If $G_1$ and $G_2$ are vertex
    decomposable, then $G$ is vertex decomposable.
\end{lemma}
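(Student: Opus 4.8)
The statement to prove is that a disjoint union $G = G_1 \sqcup G_2$ of vertex decomposable graphs is vertex decomposable. The plan is to proceed by induction on the number of vertices of $G$, and the key point is to carefully track both conditions in Definition \ref{defn:vd}: well-coveredness and the recursive shedding-vertex condition.

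First I would handle the base case. If $G$ has no edges, then $G$ consists only of isolated vertices, and it is vertex decomposable by definition; this covers in particular the situation where both $G_1$ and $G_2$ are edgeless. More generally, if one of the two graphs, say $G_2$, consists only of isolated vertices, then a shedding vertex for $G_1$ (if $G_1$ has an edge) serves as a shedding vertex for $G$, since adding isolated vertices does not affect the recursive structure; so without loss of generality both $G_1$ and $G_2$ contain at least one edge and we may pick a shedding vertex $x$ of $G_1$.

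Next I would verify the inductive step. Assume $x \in V(G_1)$ is a shedding vertex for $G_1$, so that $G_1 \setminus x$ and $G_1 \setminus N_{G_1}[x]$ are vertex decomposable. Since $G = G_1 \sqcup G_2$, we have $N_G[x] = N_{G_1}[x]$, and hence
\begin{align*}
G \setminus x &= (G_1 \setminus x) \sqcup G_2, \\
G \setminus N[x] &= (G_1 \setminus N_{G_1}[x]) \sqcup G_2.
\end{align*}
Both $G_1 \setminus x$ and $G_1 \setminus N_{G_1}[x]$ are vertex decomposable, and $G_2$ is vertex decomposable by hypothesis; each of these disjoint unions has strictly fewer vertices than $G$, so by the induction hypothesis both $G \setminus x$ and $G \setminus N[x]$ are vertex decomposable. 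It remains to check that $G$ is well-covered: this follows because a minimal vertex cover of $G$ restricts to a minimal vertex cover of each $G_i$, and conversely a union of minimal vertex covers of $G_1$ and $G_2$ is a minimal vertex cover of $G$; since $G_1$ and $G_2$ are each well-covered (being vertex decomposable), every minimal vertex cover of $G$ has cardinality equal to the sum of the two common sizes. Thus $G$ satisfies Definition \ref{defn:vd}, completing the induction.

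The only place requiring genuine care is the reduction ensuring a shedding vertex actually exists, namely the observation that one can always choose the shedding vertex inside whichever component still has an edge, together with the fact that isolated vertices are harmless — both for the shedding condition and for well-coveredness. Everything else is a bookkeeping exercise with the identity $N_G[x] = N_{G_1}[x]$ for $x$ in one component of a disjoint union.
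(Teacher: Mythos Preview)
The paper does not supply its own proof of this lemma; it is quoted directly from Woodroofe \cite[Lemma~20]{W} without argument, so there is no in-paper proof to compare against. Your proof is therefore being assessed on its own merits.

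Your argument is correct and is essentially the standard one. The induction on $|V(G)|$, the identification $N_G[x] = N_{G_1}[x]$ for $x \in V(G_1)$, and the observation that minimal vertex covers of a disjoint union split as unions of minimal vertex covers of the pieces are exactly the ingredients needed under the paper's Definition~\ref{defn:vd}. One cosmetic remark: the case split between ``$G_2$ consists only of isolated vertices'' and ``both $G_i$ have an edge'' is not actually needed, since your inductive step works verbatim in either situation once you know $G_1$ has a shedding vertex; the only genuine base case is when $G$ itself is edgeless. Streamlining that would shorten the write-up, but nothing is wrong as stated.
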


\subsection{Commutative algebra background}
We now turn to the needed results from
commutative algebra.
If $I$ is a homogeneous ideal of the polynomial ring
$R = k[x_1,\ldots,x_n]$, then 
there is a {\it minimal graded free resolution} associated with $R/I$.  That is,
there exists a long exact sequence of the form 
\[0 \rightarrow \bigoplus_{j \in \mathbb{N}} R(-j)^{\beta_{p,j}(R/I)} 
 \rightarrow \cdots
 \rightarrow \bigoplus_{j \in \mathbb{N}} R(-j)^{\beta_{1,j}(R/I)} 
\rightarrow R \rightarrow R/I \rightarrow 0\]
that is minimal with respect to the exponents on the free modules, where $R(-j)$ is the graded $R$-module obtained by shifting the 
degrees of $R$ by $j$ and where $p \leq n$.  The number
$\beta_{i,j}(R/I)$ is the $(i,j)$th {\it graded Betti number} of $R/I$.  For a detailed treatment see \cite{P}.   

The edge ideal $I(G)$ of the graph $G$, as defined in 
the introduction, and the toric ideal $I_G$ of a graph 
$G$, to be defined in Section 4, are both 
homogeneous ideals.  Consequently,
they have a minimal graded  free resolution, and one
is interested in how the graph theoretical
invariants are encoded into the graded
Betti numbers.  Of particular interest
is the following invariant, which is
a rough measure of the complexity of 
$R/I$.

\begin{definition}
For any homogeneous ideal $I$ of $R$, 
the {\it Castelnuovo-Mumford regularity} (or regularity) of $R/I$ is 
\[{\rm reg}(R/I) = \max\{j-i ~|~ \beta_{i,j}(R/I) \neq 0\}. \]
\end{definition}

When computing regularity of edge ideals, we can
restrict to connected components.

\begin{theorem}[{\cite[Proposition 3.4]{MV}}]\label{regconnected}
Suppose that the graph $G = G_1 \sqcup G_2$ is
the disjoint union of the graphs $G_1$ and $G_2$.
Then 
${\rm reg}(R/I(G)) = {\rm reg}(R/I(G_1)) +
{\rm reg}(R/I(G_2)).$
\end{theorem}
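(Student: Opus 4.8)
The plan is to realize $R/I(G)$ as a tensor product over $k$ of the two quotient rings and then compare minimal graded free resolutions. Write $V(G_1) = \{x_1,\dots,x_s\}$ and $V(G_2) = \{y_1,\dots,y_t\}$, and set $R_1 = k[x_1,\dots,x_s]$, $R_2 = k[y_1,\dots,y_t]$, so that $R = k[x_1,\dots,x_s,y_1,\dots,y_t] \cong R_1 \otimes_k R_2$. Since $G$ has no edge joining a vertex of $G_1$ to a vertex of $G_2$, we have $I(G) = I(G_1)R + I(G_2)R$, and hence an isomorphism of graded $k$-algebras $R/I(G) \cong (R_1/I(G_1)) \otimes_k (R_2/I(G_2))$. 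Because graded Betti numbers, and therefore regularity, are unchanged under the flat polynomial extension $R_1 \hookrightarrow R$, it suffices to prove that regularity is additive across this tensor product.

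First I would choose minimal graded free resolutions $F_\bullet \to R_1/I(G_1)$ over $R_1$ and $H_\bullet \to R_2/I(G_2)$ over $R_2$. Regarding both as complexes of graded $R$-modules via $R_1, R_2 \hookrightarrow R$, form the total tensor complex $(F \otimes_k H)_\bullet$ with $(F\otimes_k H)_i = \bigoplus_{a+c=i} F_a \otimes_k H_c$ and the usual signed differential; each term is a free graded $R$-module. Since $k$ is a field, every $R_i$-module is $k$-flat, so the Künneth theorem shows that $(F \otimes_k H)_\bullet$ is acyclic with $H_0 = (R_1/I(G_1)) \otimes_k (R_2/I(G_2)) \cong R/I(G)$ — there are no Tor corrections — so it is a graded free resolution of $R/I(G)$. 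Moreover, since the differentials of $F_\bullet$ and $H_\bullet$ have all entries in the respective graded maximal ideals, the induced differential of $F \otimes_k H$ has all entries in the graded maximal ideal of $R$; hence $(F \otimes_k H)_\bullet$ is in fact the \emph{minimal} graded free resolution of $R/I(G)$.

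It follows that the graded Betti numbers are given by the convolution
\[
\beta_{i,j}(R/I(G)) = \sum_{\substack{a+c = i \\ b+d = j}} \beta_{a,b}(R_1/I(G_1))\,\beta_{c,d}(R_2/I(G_2)).
\]
A summand on the right is nonzero exactly when both of its factors are, so
\[
{\rm reg}(R/I(G)) = \max\{(b-a)+(d-c) : \beta_{a,b}(R_1/I(G_1)) \neq 0,\ \beta_{c,d}(R_2/I(G_2)) \neq 0\},
\]
and since each of $R_1/I(G_1)$ and $R_2/I(G_2)$ has at least one nonzero graded Betti number (e.g.\ $\beta_{0,0}=1$), this maximum separates as the sum of the two individual maxima. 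Therefore ${\rm reg}(R/I(G)) = {\rm reg}(R_1/I(G_1)) + {\rm reg}(R_2/I(G_2)) = {\rm reg}(R/I(G_1)) + {\rm reg}(R/I(G_2))$, as claimed.

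The one step that genuinely needs care — and the part I expect to be the main obstacle to writing cleanly rather than conceptually — is the simultaneous verification that $(F \otimes_k H)_\bullet$ is exact (via Künneth, using crucially that we work over a field so no Tor terms intervene) and that it remains minimal over $R$; granting that, the Betti-number convolution and the additivity of regularity are routine bookkeeping. If one preferred to avoid tensor products of complexes, an alternative is to induct on $|E(G)|$ using the short exact sequences relating $R/I(G)$, $R/I(G\setminus e)$, and $R/(I(G):e)$, but the Künneth argument is the most direct route.
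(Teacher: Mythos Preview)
Your argument is correct: the tensor-product decomposition $R/I(G) \cong (R_1/I(G_1)) \otimes_k (R_2/I(G_2))$, the K\"unneth/minimality verification for $(F \otimes_k H)_\bullet$, and the resulting Betti-number convolution are all standard and sound, and additivity of regularity follows exactly as you say.

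There is nothing to compare against, however, because the paper does not supply its own proof of this statement. It is quoted as \cite[Proposition 3.4]{MV} and used as a black box. Your write-up is precisely the kind of proof one would give if asked to fill in that citation; the approach via tensor products of minimal resolutions is the standard one (and is essentially what underlies the cited reference as well).
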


The following two results
link some of the concepts introduced in this section.

\begin{theorem}[{\cite[Lemma 2.2]{K}}]\label{thm:katzmanbound}
    Let $G$ be any finite simple graph.  Then
    ${\rm im}(G) \leq {\rm reg}(R/I(G)).$
\end{theorem}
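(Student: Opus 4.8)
The plan is to produce an induced subgraph of $G$ on which the regularity of the edge ideal is both easy to compute and equal to ${\rm im}(G)$, and then to use that passing to an induced subgraph cannot increase the regularity. Concretely, set $c = {\rm im}(G)$ and fix an induced matching $\{e_1,\dots,e_c\}$ of $G$ with $e_i = \{a_i,b_i\}$. Let $W = \{a_1,b_1,\dots,a_c,b_c\}$ and let $H = G_W$ be the induced subgraph of $G$ on $W$. By the definition of an induced matching, $H$ is exactly the disjoint union $e_1 \sqcup \cdots \sqcup e_c$ of the $c$ matching edges, i.e., $c$ copies of $K_2$ with no edges between them.

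The first step is to compute the regularity of $R'/I(H)$, where $R' = k[W]$ is the polynomial ring on the vertices in $W$ (note $\beta_{i,j}(R/I(H)) = \beta_{i,j}(R'/I(H))$ since the extra variables do not occur in $I(H)$). Since $H$ is a disjoint union, applying Theorem~\ref{regconnected} a total of $c-1$ times gives ${\rm reg}(R'/I(H)) = \sum_{i=1}^{c}{\rm reg}(k[a_i,b_i]/(a_ib_i))$. Each summand equals $1$: the ideal $(a_ib_i)$ is principal, so $k[a_i,b_i]/(a_ib_i)$ has minimal free resolution $0 \to R'(-2) \xrightarrow{a_ib_i} R' \to k[a_i,b_i]/(a_ib_i) \to 0$, hence regularity $2-1=1$. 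Therefore ${\rm reg}(R'/I(H)) = c = {\rm im}(G)$.

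The second step — and the real content — is the monotonicity statement: if $H = G_W$ is an induced subgraph of $G$, then ${\rm reg}(R'/I(H)) \le {\rm reg}(R/I(G))$. I would prove this at the level of graded Betti numbers, namely $\beta_{i,j}(R/I(H)) \le \beta_{i,j}(R/I(G))$ for all $i,j$, which immediately forces the regularity inequality. This follows from Hochster's formula (see, e.g., \cite{Vbook}): for a squarefree monomial ideal the graded Betti numbers are sums of dimensions of reduced homology groups of the induced subcomplexes $\Delta_S$ of the associated Stanley–Reisner complex $\Delta$, the complex $\Delta$ attached to $I(G)$ is the independence complex of $G$, and its restriction to any $S \subseteq W$ is precisely the independence complex of $G_S$. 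Thus every term contributing to $\beta_{i,j}(R/I(H))$ (indexed by subsets $S \subseteq W$) already occurs among the terms contributing to $\beta_{i,j}(R/I(G))$ (indexed by subsets $S \subseteq V$), giving the inequality. As an alternative that avoids Hochster's formula, one can delete the vertices of $V \setminus W$ one at a time, using for a monomial ideal $I$ and a variable $x$ the short exact sequence $0 \to (R/(I:x))(-1) \xrightarrow{\,x\,} R/I \to R/(I+(x)) \to 0$ together with the monomial-ideal facts ${\rm reg}(R/(I:x)) \le {\rm reg}(R/I)$ and ${\rm reg}(R/(I+(x))) \le {\rm reg}(R/I)$.

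Chaining the two steps gives ${\rm im}(G) = {\rm reg}(R'/I(H)) \le {\rm reg}(R/I(G))$, as claimed. The main obstacle is establishing the monotonicity of regularity under induced subgraphs; once that is in hand, the rest is routine bookkeeping with the disjoint-union formula of Theorem~\ref{regconnected} and the trivial computation ${\rm reg}(k[x,y]/(xy)) = 1$.
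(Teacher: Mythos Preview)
The paper does not supply its own proof of this statement; it is simply quoted from \cite[Lemma~2.2]{K}. Your argument is correct and is essentially the standard proof of Katzman's bound: realise a maximum induced matching as an induced subgraph $H$ that is a disjoint union of $c={\rm im}(G)$ edges, compute ${\rm reg}(R'/I(H))=c$ via Theorem~\ref{regconnected}, and then use monotonicity of regularity under passage to induced subgraphs, established through Hochster's formula. One small caveat about your alternative route: the inequalities ${\rm reg}(R/(I:x)) \le {\rm reg}(R/I)$ and ${\rm reg}(R/(I+(x))) \le {\rm reg}(R/I)$ are not consequences of the short exact sequence by itself (the sequence only bounds ${\rm reg}(R/I)$ from above in terms of the other two), so that paragraph would need further justification---but since your primary Hochster argument already does the job cleanly, this does not affect the validity of the proof.
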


The following result is a special case of 
{\cite[Theorem 4.2]{HW} and  
\cite[Corollary 2.11]{MK}}, applied
to pure independence complexes of graphs
that are vertex decomposable.

\begin{theorem}\label{thm.recursivereg}
    Let $G$ be a vertex decomposable graph. If $x$ is a shedding vertex
    of $G$, then
    $${\rm reg}(R/I(G)) = \max\{{\rm reg}(R/I(G\setminus x)), {\rm reg}(R/I(G\setminus N[x])+1\}.$$
\end{theorem}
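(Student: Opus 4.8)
The plan is to recast the statement in the language of simplicial complexes and then invoke the cited results. First I would set $\Delta = {\rm Ind}(G)$, the independence complex of $G$ (whose faces are the independent sets of $G$), and note that $R/I(G) = k[\Delta]$ is the Stanley--Reisner ring of $\Delta$, since $I(G)$ is precisely the Stanley--Reisner ideal of $\Delta$. By the results of Dochtermann--Engstr\"om \cite{DE} and Woodroofe \cite{W}, the graph $G$ is vertex decomposable in the sense of Definition \ref{defn:vd} exactly when $\Delta$ is a vertex decomposable simplicial complex; moreover, since a vertex decomposable graph is well-covered, $\Delta$ is pure.

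Next I would record the standard dictionary between operations on $G$ and operations on $\Delta$: for a vertex $x$ of $G$ the deletion and link of $\Delta$ at $x$ are ${\rm del}_\Delta(x) = {\rm Ind}(G \setminus x)$ and ${\rm lk}_\Delta(x) = {\rm Ind}(G \setminus N[x])$, the latter being a complex on the smaller vertex set $V \setminus N[x]$. Since enlarging the ambient polynomial ring of an edge ideal by variables not occurring in the ideal merely tensors the quotient by a polynomial ring, this change of vertex set does not affect regularity.

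With these identifications, the theorem becomes a direct application of \cite[Theorem 4.2]{HW} and \cite[Corollary 2.11]{MK}: for a vertex decomposable (a priori not necessarily pure) complex $\Delta$ and a shedding vertex $v$ of $\Delta$, those results give
\[ {\rm reg}(k[\Delta]) = \max\{{\rm reg}(k[{\rm del}_\Delta(v)]),\ {\rm reg}(k[{\rm lk}_\Delta(v)]) + 1\}, \]
and substituting $\Delta = {\rm Ind}(G)$, $v = x$, ${\rm del}_\Delta(x) = {\rm Ind}(G \setminus x)$, and ${\rm lk}_\Delta(x) = {\rm Ind}(G \setminus N[x])$ yields the stated equality.

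The delicate point --- the one I would expect to take the most care --- is making the hypotheses line up: one must verify that a shedding vertex $x$ of $G$ in the sense of Definition \ref{defn:vd} really is a shedding vertex of the complex $\Delta$ in the sense required by \cite{HW,MK}, and that the purity of $\Delta$ lets their more general statements be read in the displayed form. (For example, an isolated vertex of $G$ satisfies the condition of Definition \ref{defn:vd} trivially but is the apex of a cone, where the relevant shedding condition fails and the displayed equality would be false; so one applies the result to a non-isolated shedding vertex, which for a well-covered graph is exactly the case one uses.) By contrast, the inequality ``$\leq$'' needs no hypothesis on $x$ and can be obtained directly from the short exact sequence
\[ 0 \longrightarrow \bigl(R/(I(G):x)\bigr)(-1) \xrightarrow{\ \cdot x\ } R/I(G) \longrightarrow R/(I(G)+(x)) \longrightarrow 0 \]
together with the identifications $R/(I(G)+(x)) \cong R/I(G \setminus x)$ and $R/(I(G):x) \cong R/I(G \setminus N[x])$ (each read in the appropriate polynomial ring); the shedding hypothesis is precisely what promotes this inequality to the equality of the theorem.
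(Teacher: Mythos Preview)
Your proposal is correct and takes essentially the same approach as the paper: the paper does not give an independent proof but simply states the result as a special case of \cite[Theorem~4.2]{HW} and \cite[Corollary~2.11]{MK} applied to the (pure) independence complex of $G$, which is exactly the translation you carry out. Your added discussion of the deletion/link dictionary and the caution about cone points (isolated vertices) goes beyond what the paper records, but is consistent with it.
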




In Section 4, we also require
the following result of Conca and Varbaro.  
If $>$ is an monomial order on 
$R = k[x_1,\ldots,x_n]$, then 
the {\it initial ideal} of an ideal $I$ is the 
monomial ideal ${\rm in}_>(I) =
\langle LM_<(f) ~|~ f\in I \rangle$, where
$LM_<(f)$ denotes the leading monomial
of $f$.  

\begin{theorem}[{\cite[Corollary 2.6]{CV}}]\label{regSqfreeInitialIdeal}  Let $I$ be a homogeneous ideal in $R$.
Suppose there is 
a monomial order $>$ such that
$J = {\rm in}_>(I)$ is a square-free
monomial ideal. Then ${\rm reg}(R/I) = {\rm reg}(R/J)$.
\end{theorem}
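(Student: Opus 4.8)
The equality splits into two inequalities, and the square-free hypothesis on $J$ is needed only for one of them. For ``${\rm reg}(R/I) \le {\rm reg}(R/J)$'' I would invoke the standard Gröbner degeneration: after refining $>$ by a weight vector and homogenizing, $J = {\rm in}_>(I)$ is the fiber over $t = 0$ of a flat family over $\mathbb{A}^1_k$ whose fiber over every $t \ne 0$ is isomorphic to $R/I$. Flatness makes the graded Betti numbers upper-semicontinuous, so $\beta_{i,j}(R/I) \le \beta_{i,j}(R/J)$ for all $i,j$; hence the set of pairs $(i,j)$ with $\beta_{i,j}(R/I) \ne 0$ is contained in the corresponding set for $R/J$, and taking $\max (j-i)$ over these pairs gives the inequality. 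This uses no hypothesis on $J$.

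The real content is the reverse inequality ${\rm reg}(R/I) \ge {\rm reg}(R/J)$, which I would attack via local cohomology. With $\mathfrak{m} = (x_1,\dots,x_n)$, the local cohomology characterization of regularity gives ${\rm reg}(R/I) = \max\{\,i+j ~|~ H^i_{\mathfrak{m}}(R/I)_j \ne 0\,\}$ and likewise for $R/J$, so it suffices to transfer a top-degree nonvanishing graded piece of $H^i_{\mathfrak{m}}(R/J)$ to $R/I$. The obstruction is that, unlike Betti numbers, the graded dimensions $\dim_k H^i_{\mathfrak{m}}(-)_j$ can only jump \emph{up} as $t \to 0$, so the naive comparison runs the wrong way and some genuine feature of square-free $J$ must be used. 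My plan is to carry out the reductions that make such a feature available: (i) apply a generic linear change of coordinates and pass to generic hyperplane sections, arranged so that the initial ideal of the truncated ideal stays square-free, reducing the problem to controlling $\mathfrak{m}$-depth inductively through the exact sequences $0 \to R/(I:\ell) \to R/I \to R/(I+(\ell)) \to 0$; (ii) reduce modulo a large prime $p$, which is harmless because both the regularity and the square-freeness of the initial ideal are preserved for $p \gg 0$; (iii) in characteristic $p$, use that a square-free monomial quotient is $F$-split and --- the decisive point --- that having a square-free initial ideal forces $R/I$ itself to be (compatibly) Frobenius-split. The $F$-splitting of the general fiber rigidifies the degeneration: it prevents the graded dimensions $\dim_k H^i_{\mathfrak{m}}(R/I_t)_j$ from jumping at $t=0$, so $H^i_{\mathfrak{m}}(R/I)$ and $H^i_{\mathfrak{m}}(R/J)$ have equal graded dimensions and in particular equal regularity.

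I expect step (iii) to be the genuine obstacle. The easy inequality and the homological bookkeeping of (i)--(ii) are formal, but under an arbitrary (non-square-free) Gröbner degeneration the regularity of the initial ideal can strictly exceed ${\rm reg}(R/I)$, so no purely characteristic-zero, purely homological argument can work: the proof must import the non-formal fact that square-freeness of the initial ideal deforms to Frobenius-splitness of $R/I$, which is the heart of \cite{CV}. A secondary technical nuisance is that the successive generic coordinate changes and linear sections in step (i) have to preserve square-freeness of all the initial ideals being tracked simultaneously, not just of a single one, which requires some care.
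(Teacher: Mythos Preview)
The paper does not prove this statement: it is quoted verbatim as \cite[Corollary 2.6]{CV} and used as a black box in Section~4, with no argument supplied. So there is nothing in the paper to compare your sketch against.

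For what it is worth, your outline is broadly aimed at the actual Conca--Varbaro argument (upper-semicontinuity for one inequality; positive-characteristic $F$-singularity input for the other, transmitted through local cohomology), but step~(i) as you describe it is not part of their proof and would not work: a generic linear change of coordinates destroys any chance of keeping the initial ideal square-free, so one cannot ``arrange'' hyperplane sections with that property. Conca and Varbaro instead work directly with the one-parameter degeneration and show that when the special fiber $R/J$ is a Stanley--Reisner ring it is cohomologically full (via $F$-purity/$F$-injectivity after reduction mod $p$), which forces the graded pieces of $H^i_{\mathfrak m}$ to be constant along the family; no slicing by generic hyperplanes is needed. Your identification of step~(iii) as the genuine content is correct, and your ``secondary nuisance'' is in fact fatal to step~(i) as written.
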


\section{Down-left graphs and their properties} \label{GenThm3.2}

We introduce our main object of study,  down-left graphs, and study some 
of their properties.  In Section 4, the edge ideals of these graphs arise as the initial ideals of toric ideals in certain cases.  The properties established here will allow us to compute the regularity of these edge ideals and hence, of the more complicated toric ideals.

All down-left graphs will be constructed
as induced subgraphs of the following family.

\begin{definition}\label{defn:down-left}
The graph $G(m,n)$ is  the graph on vertex set $V = \{x_{i,j} \mid 1 \leq i \leq m, 1, \leq j\leq n\}$  with edge set $E = \{\{x_{i,j}, x_{k, \ell}\} \mid i <k, j>\ell\}$.
\end{definition}

Figure \ref{downleftg34} shows the graph $G(3,4)$.   We now define down-left graphs,
which are formed by (possibly) removing vertices
in the bottom-left corner or upper-right corner
of $G(m,n)$.

\begin{definition}\label{defn:generaldown-left}
Let $\vec{a} = (a_1, a_2, \dots, a_m)$ and $\vec{b} = (b_1, b_2, \dots, b_m)$ be vectors in $\Z_{\geq 0}^m$ such that $0 =a_1 \leq a_2 \leq a_3 \leq \dots \leq a_m<n$, $1<b_1 \leq b_2 \leq \dots \leq b_{m-1}\leq b_m = n+1$ and $a_i <b_i$ for all $2 \leq i \leq m-1$.    These sequences correspond to sets of vertices in the graph $G(m,n)$ as follows
\begin{align*}
W &= \{x_{2,1}, x_{2,2}, \dots, x_{2, a_2}, x_{3,1}, x_{3,2}, \dots, x_{3, a_3}, \dots, x_{m, 1}, x_{m,2}, \dots, x_{m, a_m}\}\\
Z &=  \{x_{1,b_1}, x_{1, b_1+1}, \dots, x_{1, n}, x_{2,b_2}, x_{2, b_2+1} \dots, x_{2, n}, \dots, x_{m-1, b_{m-1}}, x_{m-1, b_{m-1}+1}  \dots, x_{m-1, n}\}.
\end{align*}
Note that if $a_i =0$, respectively $b_j =n+1$,
the vertex $x_{i,a_i}$, respectively $x_{j,b_j}$
is not included in $W$, respectively $Z$.
The graph $G(m,n,\vec{a}, \vec{b})$ is the induced subgraph of $G(m,n)$ on the vertex set $V(G(m,n))\setminus (W\cup Z)$.  We call this class of graphs  \emph{down-left graphs}.
Note that  $G(m,n) = G(m,n,\vec{0},(n+1,\ldots,n+1))$
since $Z = W = \emptyset$.
\end{definition}

\begin{observation}\label{observation}
If $a_i + 1 =b_i$, then $Z \cup W$ contains
all the vertices $$\{x_{i,1},\ldots,x_{i,a_i},x_{i,a_i+1},
\ldots,x_{i,n}\}.$$  That is, none of the vertices
in the $i$-th row appear. Consequently,
$$G(m,n,\vec{a},\vec{b}) = G(m-1,n,(a_1,\ldots,\hat{a}_i,
\ldots,a_n),(b_1,\ldots,\hat{b}_i,\ldots,b_n)).$$
By removing any such ``empty'' rows, we can assume
that $a_i +1 < b_i$ for all $i$.

Similarly, if $b_{i-1} \leq a_i+1$, then $G$
is the disjoint union of two down-left graphs, namely
$$G(i-1,b_{i-1}-1,(a_1,\ldots,a_{i-1}),(b_1,\ldots,b_{i-1}))$$
and 
$$G(m-i+1,n-a_i,(a_i-a_i,\ldots,a_m-a_i),(b_i-a_i,
\ldots,b_n-a_i))$$
Consequently, it is enough to only consider
the down-left graphs with $b_{i-1} > a_i+1$.
\end{observation}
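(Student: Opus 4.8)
The plan is to prove both assertions by unwinding Definition~\ref{defn:generaldown-left} one row at a time. First I would record, for each row $r$, exactly which columns survive: by construction $x_{r,s}\notin W$ precisely when $s>a_r$ (row $1$ never meets $W$, and $a_1=0$), while $x_{r,s}\notin Z$ precisely when $s<b_r$ (row $m$ never meets $Z$, and $b_m=n+1$). Hence the vertices of $G=G(m,n,\vec a,\vec b)$ lying in row $r$ are exactly $\{x_{r,s}\mid a_r<s<b_r\}$, and row $r$ is empty if and only if $a_r+1=b_r$. This immediately gives the first assertion: if $a_i+1=b_i$ for some $2\le i\le m-1$, then row $i$ contributes nothing, and the induced subgraph of $G(m,n)$ on the remaining rows is obtained from $G(m-1,n,(a_1,\dots,\hat a_i,\dots,a_m),(b_1,\dots,\hat b_i,\dots,b_m))$ by relabeling the rows $1,\dots,i-1,i+1,\dots,m$ in increasing order as $1,\dots,m-1$. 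One checks this is an isomorphism of graphs because the edge relation of Definition~\ref{defn:down-left} depends on the rows only through the strict order $i<k$, which is preserved by any increasing relabeling, and deleting the $i$-th coordinate of $\vec a$ and $\vec b$ matches deleting row $i$ from $W$ and $Z$; the surviving sequences still satisfy the hypotheses of Definition~\ref{defn:generaldown-left} since deleting a term from a nondecreasing sequence preserves monotonicity and the inequalities $a_j<b_j$, and the extreme values $a_1=0$, $b_m=n+1$ are untouched. Iterating clears all empty rows, so one may assume $a_j+1<b_j$ for every $j$.

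For the second assertion, suppose $b_{i-1}\le a_i+1$ for some $2\le i\le m$. I would first show that no edge of $G$ joins a vertex in rows $1,\dots,i-1$ to a vertex in rows $i,\dots,m$. If $x_{r,s}$ lies in a row $r\le i-1$ and $x_{t,u}$ in a row $t\ge i$, then $r<t$, so Definition~\ref{defn:down-left} would require $s>u$ for an edge; but $s<b_r\le b_{i-1}$ and $u>a_t\ge a_i$, so $s\le b_{i-1}-1\le a_i\le u-1<u$, a contradiction. Hence $G=G_1\sqcup G_2$, where $G_1$ is the induced subgraph on rows $1,\dots,i-1$ and $G_2$ the induced subgraph on rows $i,\dots,m$.

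Finally I would identify $G_1$ and $G_2$ with down-left graphs. By the column description above, every column occurring in rows $1,\dots,i-1$ lies in $\{1,\dots,b_{i-1}-1\}$ (because $b_r\le b_{i-1}$ there), so $G_1=G(i-1,b_{i-1}-1,(a_1,\dots,a_{i-1}),(b_1,\dots,b_{i-1}))$: the last $b$-coordinate is $(b_{i-1}-1)+1=b_{i-1}$ as required, $a_{i-1}<b_{i-1}-1$ by the no-empty-row reduction, and all remaining inequalities are inherited. Likewise every column occurring in rows $i,\dots,m$ lies in $\{a_i+1,\dots,n\}$ (because $a_r\ge a_i$ there); translating every column index down by $a_i$ — an increasing bijection onto $\{1,\dots,n-a_i\}$ — and relabeling rows $i,\dots,m$ as $1,\dots,m-i+1$ identifies $G_2$ with $G(m-i+1,\,n-a_i,\,(a_i-a_i,\dots,a_m-a_i),\,(b_i-a_i,\dots,b_m-a_i))$; the translation preserves the strict column order used in Definition~\ref{defn:down-left} and carries $W,Z$ to the corresponding sets, and the translated sequences meet the hypotheses of Definition~\ref{defn:generaldown-left} (new first $a$-coordinate $0$, new last $b$-coordinate $(n-a_i)+1$, translation preserving all order relations, and $b_i-a_i>1$ again by the no-empty-row reduction). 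There is no deep obstacle here: the work is bookkeeping — keeping the two row relabelings and the single column translation straight, and verifying that the truncated and translated parameter vectors satisfy every inequality of Definition~\ref{defn:generaldown-left} — and it is precisely to make those final verifications go through that one first reduces, using the first assertion, to the case with no empty rows.
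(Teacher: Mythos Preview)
Your argument is correct. The paper states this as an Observation without proof, treating it as routine bookkeeping that follows directly from Definition~\ref{defn:generaldown-left}; you have simply written out that bookkeeping carefully, and your key step---the row-by-row description $\{x_{r,s}\mid a_r<s<b_r\}$ of the surviving vertices, together with the chain $s\le b_{i-1}-1\le a_i\le u-1<u$ to rule out crossing edges---is exactly the intended reasoning. Your remark at the end, that one should apply the first reduction (clearing empty rows) before the second so that the truncated parameter vectors satisfy all the inequalities of Definition~\ref{defn:generaldown-left}, is a genuine subtlety the paper glosses over; without it the pieces $G_1,G_2$ might fail conditions like $a_{i-1}<b_{i-1}-1$ or $b_i-a_i>1$, so you are right to flag it.
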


Informally, the graph $G(m,n, \vec{a}, \vec{b})$ is 
the graph $G(n,m)$ with  staircases of vertices (and all adjacent edges) removed from the bottom-left and top-right portions of the graph.  We include an example
to illustrate this family.

\begin{example} The graph
$G(5,6,\vec{a},\vec{b})$ with $\vec{a} = (0,0,1,2,2)$ and
$\vec{b}=(5,5,6,6,7)$ is given
in Figure \ref{g(5,6,a,b)}. Following Definition 
\ref{defn:generaldown-left}, the
sets $W$ and $Z$ are 
$$
W = \{x_{2,0},x_{3,1},x_{4,1},x_{4,2},x_{5,1},x_{5,2}\} 
~\mbox{and}~
Z = \{x_{1,5},x_{1,6},x_{2,5},
x_{2,6},x_{3,6},x_{4,6},x_{5,7}\}.
$$
We omit the vertex $x_{2,0}$ from
$W$ and $x_{5,7}$ from $Z$.
We then consider the induced
subgraph of $G = G(5,6)$ on the 
vertex set $V(G) \setminus (W \cup Z)$.
Note that we have
supressed the labelling of the vertices.  However,
we are assuming that vertex $x_{i,j}$ appears in the
$i$th row and $j$th column of the grid (using 
the convention of matrix notation).
For reference, all of the vertices
of $G(5,6)$ are included;  the graph $G(5,6,\vec{a},\vec{b})$ 
does not include the vertices
that are also in $W\cup Z$, which are denoted by 
the solid black vertices.  The graph
$G(5,6,\vec{a},\vec{b})^\circ$ is the graph
we obtain by removing all the isolated vertices
in Figure \ref{g(5,6,a,b)}.
\begin{figure}[h!]
\begin{tikzpicture}[scale=0.27]

\draw (0,9) -- (8,12);
\draw (0,9) -- (16,12);
\draw (0,9) -- (24,12);

\draw (8,9) -- (16,12);
\draw (8,9) -- (24,12);

\draw (16,9) -- (24,12);

\draw (8,6) -- (16,12);
\draw (8,6) -- (16,9);
\draw (8,6) to[out =35, in =190] (24,12);
\draw (8,6) -- (24,9);

\draw (16,6) -- (24,9);
\draw (16,6) -- (24,12);

\draw (16,3) -- (24,6);
\draw (16,3) -- (24,9);
\draw (16,3) -- (24,12);
\draw (16,3) -- (30,6);

\draw (24,3) -- (30,6);

\draw (16,0) -- (24,3);
\draw (16,0) -- (24,6);
\draw (16,0) -- (24,9);
\draw (16,0) -- (24,12);
\draw (16,0) -- (30,3);
\draw (16,0) to[out=35, in=190] (30,6);

\draw (24,0) -- (30,3);
\draw (24,0) -- (30,6);



\fill[fill=black,draw=black] (0,0) circle (.2);
\fill[fill=black,draw=black] (0,3) circle (.2);
\fill[fill=black,draw=black] (0,6) circle (.2);
\fill[fill=white,draw=black] (0,9) circle (.2);
\fill[fill=white,draw=black] (0,12) circle (.2);
\fill[fill=black,draw=black] (8,0) circle (.2);
\fill[fill=black,draw=black] (8,3) circle (.2);
\fill[fill=white,draw=black] (8,6) circle (.2);
\fill[fill=white,draw=black] (8,9) circle (.2);
\fill[fill=white,draw=black] (8,12) circle (.2);
\fill[fill=white,draw=black] (16,0) circle (.2);
\fill[fill=white,draw=black] (16,3) circle (.2);
\fill[fill=white,draw=black] (16,6) circle (.2);
\fill[fill=white,draw=black] (16,9) circle (.2);
\fill[fill=white,draw=black] (16,12) circle (.2);
\fill[fill=white,draw=black] (24,0) circle (.2);
\fill[fill=white,draw=black] (24,3) circle (.2);
\fill[fill=white,draw=black] (24,6) circle (.2);
\fill[fill=white,draw=black] (24,9) circle (.2);
\fill[fill=white,draw=black] (24,12) circle (.2);
\fill[fill=white,draw=black] (30,0) circle (.2);
\fill[fill=white,draw=black] (30,3) circle (.2);
\fill[fill=white,draw=black] (30,6) circle (.2);
\fill[fill=black,draw=black] (30,9) circle (.2);
\fill[fill=black,draw=black] (30,12) circle (.2);
\fill[fill=white,draw=black] (36,0) circle (.2);
\fill[fill=black,draw=black] (36,3) circle (.2);
\fill[fill=black,draw=black] (36,6) circle (.2);
\fill[fill=black,draw=black] (36,9) circle (.2);
\fill[fill=black,draw=black] (36,12) circle (.2);
\end{tikzpicture}
\caption{The graph $G(5,6,\vec{a},\vec{b})$ with 
$\vec{a} = (0,0,1,2,2)$ and $\vec{b} = (5,5,6,6,7)$. The black vertices belong to $G(5,6)$, but not
$G(5,6,\vec{a},\vec{b})$. The
graph $G(5,6,\vec{a},\vec{b})^\circ$ is obtained
by removing all the isolated vertices.}\label{g(5,6,a,b)}
\end{figure}
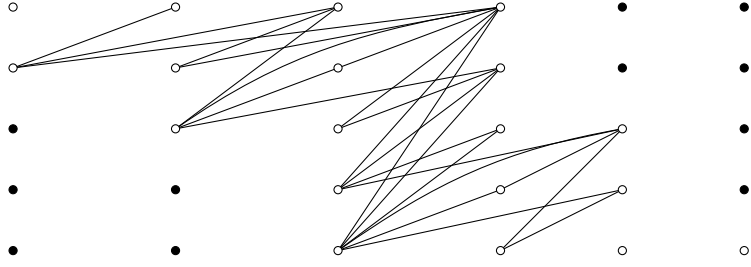

Informally, the vector
$\vec{a}$ is describes how
many vertices we remove 
from each row starting
from the the left, i.e., 0 vertices
from row 1, 0 vertices from row 2, 1 vertex from row 3, 2 vertices
from row 4,and 2 vertices from row 5.  The vector $\vec{b}$ 
describes where to remove vertices
from the righthand side, i.e., 
starting with the fifth vertex
of row 1, remove all the remaining vertices from that row, in the second row, remove the fifth vertex
and all remaining vertices in that row, and so on.
\end{example}

\begin{remark}
We have defined a down-left graph in such a way that it always contains isolated vertices.  Indeed, the vertices $x_{1,1}$ and $x_{m,n}$ are always isolated and there may be more isolated vertices, as
seen in the previous example.
\end{remark}

We now prove that all down-left
graphs are well-covered.  We start with
some lemmas.

\begin{lemma}\label{lem:alongsamerow}
Let $G = G(m,n,\vec{a},\vec{b})$.
Suppose that $W$ is a maximal
independent set of $G$.  If
$x_{i,j}, x_{i,k} \in W$, then
$x_{i,\ell} \in W$ for all 
$j < \ell < k$.  Similarly,
if $x_{i,j},x_{k,j} \in W$, then
$x_{\ell,j} \in W$ for all
$i < \ell < k$.
\end{lemma}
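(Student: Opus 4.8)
The plan is to argue by contradiction, showing that if a vertex strictly between two members of $W$ were missing from $W$, then we could add it back without destroying independence, contradicting maximality. So suppose $x_{i,j}, x_{i,k} \in W$ with $j < k$, and suppose there is some $\ell$ with $j < \ell < k$ such that $x_{i,\ell} \notin W$. First I would observe that $x_{i,\ell}$ is in fact a vertex of $G(m,n,\vec a,\vec b)$: since $x_{i,j}$ is present and $j < \ell$, the monotonicity of $\vec a$ together with the description of $W$ in Definition \ref{defn:generaldown-left} forces $a_i < j \le \ell$, so $x_{i,\ell}$ is not removed from the left; and since $x_{i,k}$ is present and $\ell < k$, we get $\ell < k < b_i$ (using $b_i = n+1$ when row $i$ has no right-hand removals), so $x_{i,\ell}$ is not removed from the right. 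Hence $x_{i,\ell} \in V(G(m,n,\vec a,\vec b))$.

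Next I would show that $W \cup \{x_{i,\ell}\}$ is still an independent set, which contradicts the maximality of $W$ (since $x_{i,\ell} \notin W$ and $x_{i,\ell} \in V$). To see this, I need to check that $x_{i,\ell}$ has no neighbor in $W$. By Definition \ref{defn:down-left}, a neighbor of $x_{i,\ell}$ in $G(m,n)$ is any vertex $x_{p,q}$ with either ($p > i$ and $q < \ell$) or ($p < i$ and $q > \ell$). Take any such potential neighbor $x_{p,q} \in W$. In the first case ($p > i$, $q < \ell$): then $q < \ell < k$, so $\{x_{p,q}, x_{i,k}\}$ is an edge of $G(m,n)$ because $p > i$ and $q < k$; but both endpoints lie in $W$, contradicting independence of $W$. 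In the second case ($p < i$, $q > \ell$): then $q > \ell > j$, so $\{x_{p,q}, x_{i,j}\}$ is an edge because $p < i$ and $q > j$; again both endpoints lie in $W$, a contradiction. Therefore $x_{i,\ell}$ has no neighbor in $W$, so $W \cup \{x_{i,\ell}\}$ is independent, contradicting maximality. This proves the first statement.

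The second statement — if $x_{i,j}, x_{k,j} \in W$ then $x_{\ell,j} \in W$ for all $i < \ell < k$ — follows by the symmetric argument along a column. One checks $x_{\ell,j}$ is a genuine vertex: since $x_{k,j}$ is present and $\ell < k$, monotonicity of $\vec a$ gives $a_\ell \le a_k < j$, so $x_{\ell,j}$ survives the left removal; since $x_{i,j}$ is present and $i < \ell$, monotonicity of $\vec b$ gives $j < b_i \le b_\ell$, so it survives the right removal. Then any potential neighbor $x_{p,q} \in W$ of $x_{\ell,j}$ has either ($p > \ell$, $q < j$), making $\{x_{p,q}, x_{i,j}\}$ an edge (as $p > \ell > i$ and $q < j$), or ($p < \ell$, $q > j$), making $\{x_{p,q}, x_{k,j}\}$ an edge (as $p < \ell < k$ and $q > j$); either way $W$ is not independent, a contradiction. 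I expect the only mildly fiddly point to be the bookkeeping around the boundary conventions ($a_i = 0$, $b_j = n+1$) when verifying that the intermediate vertex actually belongs to $G(m,n,\vec a,\vec b)$ — everything else is immediate from the two defining inequalities on edges in Definition \ref{defn:down-left} and the monotonicity hypotheses on $\vec a$ and $\vec b$ in Definition \ref{defn:generaldown-left}.
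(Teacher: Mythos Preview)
Your proof is correct and follows essentially the same contradiction argument as the paper: assume the intermediate vertex is missing, then show any putative neighbor in $W$ would already be adjacent to one of the two given vertices of $W$. You even add a detail the paper glosses over, namely the verification that the intermediate vertex $x_{i,\ell}$ (respectively $x_{\ell,j}$) actually lies in $V(G(m,n,\vec a,\vec b))$; note that in the row case this only needs $a_i < j$ and $k < b_i$ for the fixed row $i$, not the monotonicity of $\vec a$, though monotonicity is indeed what makes the column case work.
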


\begin{proof}
Suppose $x_{i,j}, x_{i,k} \in W$, and 
suppose that there is an integer $\ell$ such
tht $j < \ell < k$.  We wish to show
that $x_{i,\ell}$ is also in $W$.
Suppose
$x_{i,\ell} \not\in W$.  
Because $W$ is a maximal independent set, this would
mean that $x_{i,\ell}$ is adjacent
to some vertex, say $z$, in $W$.  

Now $x_{i,\ell}$ is adjacent to all the vertices
below row $i$ and to the left of 
column $\ell$, and to all the vertices
above row $i$ and to the right of column
$\ell$. If the vertex $z$ is
below row $i$ and to the left of
column $\ell < k$, then it is also
adjacent to $x_{i,k}$.  Since 
$x_{i,k} \in W$, $z$ cannot have this
property.  On the other hand, if
$z$ is above row $i$ and to the right
of column $ j  < \ell$, then $z$
is also adjacent to $x_{i,j}$.  But
this would mean $z$ and $x_{i,j}$ are adjacent in an independent set, which is
a contradiction.  So, no such $z$ exists.
Thus $x_{i,\ell} \in W$, as desired.

The second statement is proved similarly.
\end{proof}
\begin{lemma}\label{eitheror}
Let $G = G(m,n,\vec{a},\vec{b})$ with
$a_i +1 < b_{i-1}$ for all $i$.
Suppose that $W$ is a maximal
independent set of $G$.  If 
$x_{i,j} \in W$ with $(i,j) \neq (m,n)$, then exactly one
of $x_{i+1,j}$ or $x_{i,j+1}$ is also in
$W$.
\end{lemma}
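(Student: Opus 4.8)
The plan is to prove ``exactly one'' by separately establishing that \emph{at most} one of $x_{i+1,j},x_{i,j+1}$ lies in $W$ and that \emph{at least} one does. The first half is immediate: after relabelling, the pair $\{x_{i,j+1},x_{i+1,j}\}$ meets the defining condition for an edge of $G(m,n)$, since $i<i+1$ and $j+1>j$ --- concretely, $x_{i+1,j}$ sits one row below and one column to the left of $x_{i,j+1}$. Hence if both belonged to $W$, then (being vertices of $G$) they would be joined by an edge of $G$ lying inside the independent set $W$, a contradiction.

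For ``at least one'' I would first record a structural fact about $G=G(m,n,\vec{a},\vec{b})$ that legitimizes later appeals to Lemma~\ref{lem:alongsamerow}: in each fixed row the retained vertices form a contiguous block of columns (namely $\{x_{i,\ell} : a_i<\ell<b_i\}$, using the conventions $a_1=0$ and $b_m=n+1$ built into Definition~\ref{defn:generaldown-left}), and since $\vec{a}$ and $\vec{b}$ are nondecreasing, in each fixed column the retained vertices likewise form a contiguous block of rows. Next, using that $x_{i,j}\in V(G)$ forces $a_i<j<b_i$ together with the standing hypothesis applied with index $i+1$, namely $a_{i+1}+1<b_i$, I would check that at least one of $x_{i+1,j},x_{i,j+1}$ is genuinely a vertex of $G$: the boundary cases $i=m$ and $j=n$ follow at once from $a_m<n$ and from $b_i=n+1$ respectively, while if $i<m$, $j<n$ and both vertices were absent one would get $b_i=j+1$ and $a_{i+1}\ge j$, hence $a_{i+1}+1\ge b_i$ --- contradicting the hypothesis.

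Now suppose toward a contradiction that neither $x_{i+1,j}$ nor $x_{i,j+1}$ lies in $W$. If $x_{i+1,j}\in V(G)$, maximality of $W$ gives it a neighbour $z\in W$, and a neighbour of $x_{i+1,j}$ is either below-and-left of it or above-and-right of it. In the first case $z$ is also below-and-left of $x_{i,j}$, hence adjacent to $x_{i,j}\in W$ --- impossible. In the second case $z=x_{k,\ell}$ with $k\le i$ and $\ell>j$; if $k<i$ then $z$ is above-and-right of $x_{i,j}$, again impossible, so $k=i$, i.e.\ $z=x_{i,\ell}$ with $\ell>j$, and Lemma~\ref{lem:alongsamerow} applied along row $i$ to $x_{i,j},x_{i,\ell}\in W$ forces $x_{i,j+1}\in W$, contradicting our assumption. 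If instead $x_{i+1,j}\notin V(G)$, then by the previous paragraph $x_{i,j+1}\in V(G)$, and the mirror argument applied to a neighbour $z\in W$ of $x_{i,j+1}$ rules out every possibility except $z=x_{k,j}$ with $k>i$; then Lemma~\ref{lem:alongsamerow} applied along column $j$ forces $x_{i+1,j}\in W\subseteq V(G)$, which is absurd since $x_{i+1,j}$ is not a vertex of $G$. Either way we reach a contradiction, so at least one of $x_{i+1,j},x_{i,j+1}$ lies in $W$; combined with the first half this yields exactly one.

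I expect the main obstacle to be the bookkeeping about which of $x_{i+1,j},x_{i,j+1}$ actually belong to $G$: it is precisely here that the hypothesis $a_i+1<b_{i-1}$ is needed, and the statement genuinely fails without it once a full ``gap'' opens between the removed staircases. By contrast, the adjacency case analysis is routine once the row/column contiguity is in place, so I would be careful to state that contiguity explicitly before invoking Lemma~\ref{lem:alongsamerow}.
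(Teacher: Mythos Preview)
Your proof is correct and follows essentially the same strategy as the paper's: establish that at least one of $x_{i+1,j},x_{i,j+1}$ is a vertex of $G$, note they are adjacent when both are, and then use maximality together with Lemma~\ref{lem:alongsamerow} to force one into $W$. If anything you are more careful than the paper, since you explicitly record the row/column contiguity needed to invoke Lemma~\ref{lem:alongsamerow} and you separately treat the case $x_{i+1,j}\notin V(G)$, whereas the paper absorbs this into a ``similarly'' remark.
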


\begin{proof}
Let $W$ be a maximal independent
set of $G$ with $x_{i,j} \in W$.  

We first claim that at least one of
$x_{i+1,j}$ and $x_{i,j+1}$ also belong to
$V(G)$. (We need to first check this claim since these 
vertices may have been removed when constructing
$G$).  Note that if $i=m$, then since $x_{m,n} \in V(G)$,
we must have $x_{i,j},x_{i,j+1},\ldots,x_{i,n}=x_{m,n} \in V(G)$,
by the construction of $G$. Since $(i,j) \neq (m,n)$, this means that $x_{i,j+1} \in V(G)$.  Now suppose
$i < m$ and suppose neither $x_{i+1,j}$
nor $x_{i,j+1}$ are in $V(G)$.
By Observation \ref{observation}, because $a_{i+1} +1 < b_i \leq b_{i+1}$,
there are some vertices in the $(i+1)$-th row.  
If $x_{i+1,j}$ is not in $V(G)$ either $j \leq a_{i+1}$ or $j \geq b_{i+1}$. However we know that since $x_{i,j} \in V(G)$, $j < b_i \leq b_{i+1}$ and thus we must have $j\leq a_{i+1}$.  On the other hand, if $x_{i,j+1}$ is not
in $V(G)$, either $j+1 \leq a_i$ or $b_i \leq j+1$. Again since $x_{i,j} \in V(G)$ we know $j>a_i$ and thus $j+1>a_i$.
But then $b_i \leq j+1 \leq a_{i+1}+1 < b_i$, which
is a contradiction.  So, at least one
of $x_{i+1,j}$ and $x_{i,j+1}$ is in $V(G)$.

Note that if
both of the vertices 
$x_{i+1,j}$ and $x_{i,j+1}$ are
vertices in $G$, then they are 
adjacent, since $x_{i+1,j}$ is both
below and to the left of $x_{i,j+1}$.  
So, at most one of these two vertices can
belong to $W$.

So, suppose that neither of
these two vertices belong to $W$.
Since $W$ is a maximal independent
set, if we add either vertex to $W$,
the set is no longer independent, i.e.,
the vertex that we add is adjacent to
a vertex in $W$.  Suppose we add
$x_{i+1,j}$, and it is adjacent to $z \in W$.  Now $z$ cannot be below and to
the left of $x_{i+1,j}$.  If it was,
then it is also down and left of $x_{i,j}$, and so it would be adjacent
to $z$, contradicting our assumption that $W$ is an independent set.
So, $z$ must be above and to the 
right of $x_{i+1,j}$, i.e., 
$z = x_{r,s}$ with $r < i+1$ and $s >j$.
If $r < i$, then $z$ is above and to 
the right of $x_{i,j}$, so the two
vertices are adjacent, contradicting
our assumption that $W$ is independent.
So $z = x_{i,s}$ with $s >j$.  But then
by Lemma \ref{lem:alongsamerow}, we must
also have $x_{i,j+1} \in W$, which 
contradicts our assumption that
$W$ does not contain either of these
two vertices.

We derive a similar conclusion
if $W$ is a maximal independent set and if we 
add $x_{i,j+1}$.
\end{proof}

\begin{theorem}\label{thm:wellcovered}
Every down-left graph 
$G = G(m,n,\vec{a},\vec{b})$ is a
well-covered graph. 
\end{theorem}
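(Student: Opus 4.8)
The plan is to show that, after harmless reductions, every maximal independent set of $G$ is precisely the vertex set of a monotone lattice path from $x_{1,1}$ to $x_{m,n}$ in the underlying $m\times n$ grid, so that it has cardinality $m+n-1$; since this value does not depend on the set, $G$ is well-covered.

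First I would reduce to the case in which $a_i+1<b_{i-1}$ for all $i$, which is the hypothesis of Lemma \ref{eitheror} (and which automatically forces $a_i+1<b_i$, since $b_{i-1}\le b_i$, so no row of the grid is entirely deleted). This uses Observation \ref{observation}: if some row is empty we delete it, obtaining the same graph with one fewer row; and if $b_{i-1}\le a_i+1$ for some $i$, then $G$ is a disjoint union of two down-left graphs with strictly fewer rows, and a disjoint union of well-covered graphs is again well-covered (a minimal vertex cover of a disjoint union is a union of minimal vertex covers of the components). Iterating, and using that a one-row graph is edgeless and hence trivially well-covered, reduces to the claimed case.

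So assume $a_i+1<b_{i-1}$ for all $i$, and let $W$ be any maximal independent set of $G$. The vertices $x_{1,1}$ and $x_{m,n}$ lie in $V(G)$ and are isolated, so both belong to $W$. Starting from $v_0=x_{1,1}$, I would build a sequence: as long as $v_t=x_{i_t,j_t}\in W$ is not $x_{m,n}$, let $v_{t+1}$ be the unique member of $\{x_{i_t+1,j_t},\,x_{i_t,j_t+1}\}$ lying in $W$, which exists by Lemma \ref{eitheror}. Each step raises $i_t+j_t$ by $1$, so after $m+n-2$ steps the sequence reaches $x_{m,n}$, yielding a monotone lattice path $P$ with $V(P)\subseteq W$ and $|V(P)|=m+n-1$. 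For each row $r$ let $[c_r,d_r]$ be the nonempty block of columns occupied by $P$ in row $r$; since a down-step of $P$ keeps the column fixed, $1=c_1\le d_1=c_2\le d_2\le\cdots\le d_m=n$, and $P$ meets every row.

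The heart of the matter, and the main obstacle, is to prove $W=V(P)$, since a priori $W$ could contain vertices off the path $P$. As $V(P)\subseteq W$, it is enough to show that every vertex $x_{i,j}\in V(G)\setminus V(P)$ is adjacent in $G$ to some vertex of $P$, for then it cannot lie in the independent set $W$. Because $P$ meets row $i$, we have $j\notin[c_i,d_i]$. If $j>d_i$ then $i<m$ (as $d_m=n$) and $x_{i,j}$ is adjacent to the path vertex $x_{i+1,d_i}$; if $j<c_i$ then $i>1$ and $x_{i,j}$ is adjacent to the path vertex $x_{i-1,c_i}$. In particular this forces every isolated vertex of $G$ (a down-left graph may have several beyond $x_{1,1}$ and $x_{m,n}$) to lie on $P$, so the isolated vertices do not disturb the count. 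Hence $W=V(P)$ and $|W|=m+n-1$; as $W$ was arbitrary, $G$ is well-covered. The only real work is the lattice-path bookkeeping in this last step (using that $P$ visits every row and that its down-steps are column-preserving); the reduction via Observation \ref{observation}, the extraction of $P$ from Lemma \ref{eitheror}, and the isolatedness of $x_{1,1}$ and $x_{m,n}$ are routine.
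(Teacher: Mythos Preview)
Your proof is correct and follows the same lattice-path strategy as the paper: reduce via Observation \ref{observation} to the case $a_i+1<b_{i-1}$, then use Lemma \ref{eitheror} to trace a path $P$ from $x_{1,1}$ to $x_{m,n}$ inside any maximal independent set $W$. Your argument is in fact more complete than the paper's, which stops after constructing $P$ and asserts that $W$ ``describes a lattice path'' of length $m+n-1$ without explicitly verifying $W=V(P)$; your final paragraph, showing that every vertex off $P$ is adjacent to a path vertex $x_{i+1,d_i}$ or $x_{i-1,c_i}$, supplies exactly the justification the paper leaves implicit.
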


\begin{proof}
A graph $G$ is well-covered if and only if 
all its connected components are well-covered.  Thus, 
by Observation \ref{observation}, it is enough
to show that $G$ is well-covered when
$a_i+1 < b_{i-1} \leq b_i$ for all $i$.
Let $W$ be a maximal independent
set of $G$.  Since the vertex
$x_{1,1}$ is isolated in $G$,
$x_{1,1}$ belongs to $W$.  By 
Lemma \ref{eitheror}, either 
$x_{2,1}$ or $x_{1,2}$ is in $W$.
If $x_{2,1}$ is in $W$, then either
$x_{3,1}$ or $x_{2,2}$ is in $W$;  similarly, if $x_{1,2} \in W$, we have
exactly one of $x_{2,2}$ or $x_{1,3}$
in $W$.  The process repeats until the 
vertex $x_{m,n}$ is shown to be part
of $W$.  

In other words, each maximal
independent set $W$ describes a lattice
path from the vertex $x_{1,1}$ to
$x_{m,n}$, moving either right one 
column or
down one row at a time.  No path will
stop before getting to $x_{m,n}$ as shown by
Lemma \ref{eitheror}.
The length of any
such path is $m+n-1$.  So, all
the maximal independent sets have the
same size, i.e., the graph is 
well-covered.
\end{proof}

Next we will show that down-left graphs are vertex decomposable.

\begin{theorem}\label{thm:downleft=>vd}
Every down-left graph $G(m,n, \Vec{a},\Vec{b})$ is 
vertex decomposable.
\end{theorem}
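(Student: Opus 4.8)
The plan is to prove this by induction, exhibiting an explicit shedding vertex and checking that the two graphs obtained by deletion are again down-left graphs (so the induction applies), while the base case follows from Lemma \ref{components}. Since Definition \ref{defn:vd} requires $G$ to be well-covered, Theorem \ref{thm:wellcovered} already supplies that half of the argument, and by Observation \ref{observation} together with Lemma \ref{components} we may assume $G = G(m,n,\vec a,\vec b)$ is connected, with no empty rows and with $a_i+1 < b_{i-1} \le b_i$ for all $i$. We also dispose of the trivial case where $G$ consists only of isolated vertices (e.g. $m=1$, or $n=1$, or any case where $G^\circ$ is empty): such a graph is vertex decomposable by definition.

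The key step is the choice of shedding vertex. The natural candidate is a "corner" vertex of the staircase — concretely, take $x = x_{2,1}$ if it lies in $V(G)$ (i.e. $a_2 = 0$), or more generally the first non-isolated vertex along the bottom-left edge of the region. I expect that $G \setminus x$ and $G \setminus N[x]$ are each obtained from $G(m,n)$ by enlarging the removed staircases $W$ and $Z$ appropriately, hence are themselves down-left graphs $G(m',n',\vec a\,',\vec b\,')$ with fewer vertices. For $G \setminus N[x]$ this should be clear: removing $x_{i,j}$ and its closed neighborhood deletes $x_{i,j}$ together with everything below-and-left and everything above-and-right, and what remains splits as a disjoint union of (at most two) down-left graphs, to which the inductive hypothesis and Lemma \ref{components} apply. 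For $G \setminus x$ one must check that deleting a single well-chosen boundary vertex still leaves a graph of the form $G(m',n',\vec a\,',\vec b\,')$ — the point being to pick $x$ at a spot where removing it only "extends a staircase by one cell" rather than punching a hole in the middle of the grid; the lattice-path description of maximal independent sets in the proof of Theorem \ref{thm:wellcovered} is the right tool to confirm that $x$ is a genuine shedding vertex (i.e. that no maximal independent set of $G\setminus N[x]$ extends to a non-maximal independent set of $G\setminus x$, equivalently that every maximal independent set of $G$ avoiding $N[x]$ is still maximal in $G\setminus x$).

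The main obstacle will be bookkeeping: verifying that for every admissible pair $(\vec a,\vec b)$ there really is a boundary vertex $x$ whose deletion keeps the graph in the down-left family, and writing down the updated vectors $\vec a\,'$, $\vec b\,'$ in each case while respecting the monotonicity and the $a_i < b_i$ constraints of Definition \ref{defn:generaldown-left}. A secondary subtlety is the shedding condition itself: one must rule out the situation where some independent set of $G\setminus x$ that avoids $N[x]$ fails to be maximal in $G\setminus x$ even though it was maximal in $G$; I would handle this exactly as in Lemma \ref{eitheror}, by arguing that any vertex one could add back must be adjacent (via the down-left/up-right structure) to a vertex already in the set. Once the shedding vertex and the two smaller down-left graphs are identified, the induction closes immediately, and combining with Theorem \ref{thm:wellcovered} gives the statement.
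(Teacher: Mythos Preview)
Your plan is essentially the paper's proof: reduce via Observation \ref{observation} and Lemma \ref{components}, invoke Theorem \ref{thm:wellcovered} for well-coveredness, then induct on the number of vertices by exhibiting a boundary vertex whose deletion and link-deletion are again down-left graphs.

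Two small corrections. First, the specific vertex: the paper takes $x=x_{i,1}$ where $i$ is the \emph{largest} index with $a_i=0$ (the bottommost vertex in column~1). Your tentative choice $x_{2,1}$ can fail --- if $a_3=0$ as well, then deleting $x_{2,1}$ forces $a'_2=1>0=a'_3$, breaking the monotonicity required by Definition \ref{defn:generaldown-left}, so $G\setminus x$ is not a down-left graph. With the correct $i$ one has $a_{i+1}\ge 1$, so $G\setminus x=G(m,n,\vec a\,',\vec b)$ with only $a'_i=1$ changed, and $G\setminus N[x]=G(m,n,\vec a\,',\vec b\,')$ with additionally $b'_1=\cdots=b'_{i-1}=2$; both are down-left (possibly after another application of Observation \ref{observation}).

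Second, under the paper's Definition \ref{defn:vd} there is no separate ``shedding condition'' to verify: the definition only asks that $G$ be well-covered and that $G\setminus x$ and $G\setminus N[x]$ be vertex decomposable. Since both are down-left graphs on fewer vertices, induction and Theorem \ref{thm:wellcovered} finish the argument directly; the lattice-path analysis you propose is not needed here.
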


\begin{proof}
Let $G = G(m,n,\vec{a}, \vec{b})$ be a down-left graph.
By repeatedly applying Observation \ref{observation}, the graph
$G$ can be written as the disjoing union of 
smaller down-left graphs, all with the property that 
that $a_i+1 < b_i$ for all $i$ and $a_i+1 < b_{i-1}$ for
$i=2,\ldots,m$.  By
Lemma \ref{components}, it thus suffices to show that
a down-left graph $G$ is vertex decomposable
under these extra assumptions on $\vec{a}$ and $\vec{b}$.
By Theorem \ref{thm:wellcovered}, the graph $G$ is 
well-covered.  

We proceed by induction on the number of vertices $d$ in $G$.  The base case $d=1$, when $G$ has a single vertex, is vertex decomposable by definition.

Suppose that $G$ has $d \geq 2$ vertices and that any down-left graph with fewer than $d$ vertices is a vertex decomposable graph.  Let $i$ be the largest index such that $a_i=0$ (recall that by definition, $a_1 = 0$).  Let $x = x_{i,1}$.

If $i = 1$, then $x_{1,1}$ is a vertex of $G$, but
$x_{2,1},\ldots,x_{m,1}$ are not vertices of $G$ (since
$1 \leq a_j$ for $2 \leq j \leq m$.). In this case $x$ has no neighbors and so $G \setminus x$ and $G \setminus N[x]$ are both the graph $G(m, n-1, (0, a_2-1, \ldots, a_n-1),(b_1-1, b_2-1, \dots, b_n-1))$.  Informally, in this case we
are removing all the vertices in the first column.
By induction, this graph is
vertex decomposable, so $G$ is vertex decomposable.

Now assume that $i>1$.  First consider $G \setminus x$.   The graph $G \setminus x$ is of the form $G(m,n,\Vec{a'}, \Vec{b})$ where $\Vec{a'}$ only differs from $\Vec{a}$ in that $a'_i=1$.  Note that the way we chose $x$ ensures that $a_i < a_{i+1}$, so $a'_i \leq a_{i+1}=a'_{i+1}$ and thus $\vec{a'}$ is still a non-decreasing sequence.  Further, since $i>1$, $a_1' = a_1 =0$ and since $x_{i,1}$ is a vertex of $G$, $b_i \geq 2$ which means $a_i' \leq b_i$.  This ensures that $G \setminus x = G(m,n, \vec{a'}, \vec{b})$ is a down-left graph on $d-1$ vertices. (Note that it is possible the new
down-left graph must be first broken down into smaller 
down-left graphs using Observation \ref{observation} to
ensure the correct conditions on $\vec{a'}$ and $\vec{b}$;  however, by induction, the smaller subgraphs are vertex
decomposable, so by Lemma \ref{components}, $G(m,n,
\vec{a'},\vec{b})$ is vertex decomposable).
Therefore, by induction $G \setminus x$ is vertex decomposable.

Next consider $G \setminus N[x]$.  Since $x_{i,1}$ is adjacent
to all vertices to the right and above $x_{i,1}$, removing
$N[x]$ is equivalent to changing $a_i=0$ in $\vec{a}$
to $a_i=1$ and $b_1,\ldots,b_{i-1}$ in $\vec{b}$ 
to $b_1=\cdots =b_{i-1}=2$ in $\vec{b}$.  Let $\vec{a'}$ and
$\vec{b'}$ denote these new new vectors.
Note that $\vec{a'}$ is still non-decreasing with $a_1 =0$
and $\vec{b'}$ is still non-decreasing with $b_m = n+1$.
Thus $G \setminus N[x]$ is the down-left graph
$G(n,m,\Vec{a'},\Vec{b'})$.  By induction, this 
smaller down-left graph is vertex decomposable (again,
one might first need to use Observation \ref{observation}
to decompose this down-left graph into smaller down-left
graphs). 

Consequently, $G$ is vertex decomposable, as desired.
\end{proof}

Down-left graphs also  do not contain the 5-cycle as an induced subgraph.

\begin{theorem}\label{5-cycle}
Every down-left graph $G(m,n,\vec{a},\vec{b})$ is $C_5$-free.
\end{theorem}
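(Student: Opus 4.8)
The plan is to recognize every down-left graph as the \emph{incomparability graph} of a finite poset and then to use transitivity of the order to rule out an induced five-cycle.

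First I would record the structural observation that underlies the whole family. Equip the grid positions $\{(i,j)\}$ with the componentwise partial order, $(i,j) \preceq (k,\ell)$ if and only if $i \le k$ and $j \le \ell$. Comparing this with Definition \ref{defn:down-left}, two \emph{distinct} vertices $x_{i,j}, x_{k,\ell}$ of $G(m,n)$ are non-adjacent exactly when $(i,j)$ and $(k,\ell)$ are $\preceq$-comparable: the cases $i=k$, $j=\ell$, and ($i\ne k$ and $j\ne \ell$) are each immediate from the definition of the edge set. Since $G(m,n,\vec{a},\vec{b})$ is by construction the induced subgraph of $G(m,n)$ on the positions that survive the removal of $W\cup Z$, the same description of non-adjacency holds in $G = G(m,n,\vec{a},\vec{b})$, with $\preceq$ restricted to those positions.

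Next, suppose toward a contradiction that $G$ contains an induced five-cycle on vertices $v_1,\dots,v_5$, with edges $v_tv_{t+1}$ and non-edges $v_tv_{t+2}$ (indices read modulo $5$). By the observation, each of the five pairs $\{v_1,v_3\},\{v_3,v_5\},\{v_5,v_2\},\{v_2,v_4\},\{v_4,v_1\}$ is $\preceq$-comparable, and these five pairs themselves form a five-cycle $C'$ on $v_1,\dots,v_5$. Orient each edge of $C'$ from its $\preceq$-smaller endpoint to its $\preceq$-larger one. If every vertex of $C'$ were a source (out-degree $2$) or a sink (in-degree $2$), then sources and sinks would alternate around $C'$, giving a proper $2$-coloring of an odd cycle, which is impossible. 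Hence some vertex $w$ has exactly one incoming and one outgoing edge in this orientation, say $u \to w \to w'$ with $u,w'$ the two $C'$-neighbors of $w$; transitivity of $\preceq$ then gives $u \prec w'$. But for every vertex $w\in\{v_1,\dots,v_5\}$, its two neighbors in $C'$ form a pair $\{v_t,v_{t+1}\}$ that is an edge of the \emph{original} five-cycle, hence adjacent in $G$, hence $\preceq$-incomparable by the observation. This contradicts $u \prec w'$, so no induced $C_5$ exists.

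The argument is short, and the only step that needs a bit of care is the final bookkeeping: one must match ``consecutive in $C'$'' with ``at cyclic distance $2$ in the original $C_5$'', and then check that for each vertex $w$ the two $C'$-neighbors of $w$ are in turn \emph{adjacent} in the original $C_5$ and so must be incomparable. (As a shortcut one could instead invoke that incomparability graphs of posets are perfect while $\chi(C_5)=3>2=\omega(C_5)$, but I would keep the self-contained orientation argument above.)
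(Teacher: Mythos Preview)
Your proof is correct and takes a genuinely different route from the paper.  The paper argues geometrically: it fixes the rightmost-and-uppermost vertex $v_2$ of a hypothetical induced $C_5$, locates its two neighbors $v_1,v_3$ down-and-left, and then runs a three-way case split on the relative position of $v_1$ and $v_3$ (same row, same column, or staggered), in each case chasing the remaining vertices $v_4,v_5$ to a contradiction with the grid structure.  Your argument instead identifies $G(m,n,\vec a,\vec b)$ as the incomparability graph of the product order on the surviving grid positions, uses that $\overline{C_5}=C_5$ so that the non-edges form another $5$-cycle $C'$ of comparable pairs, and then exploits transitivity together with the parity obstruction (an odd cycle cannot have every vertex a source or a sink) to force two \emph{adjacent} vertices of the original $C_5$ to be comparable.

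What each buys: the paper's case analysis is entirely hands-on and requires no abstraction, but is longer and specific to this grid.  Your argument is shorter, conceptually cleaner, and in fact proves the stronger statement that \emph{any} co-comparability graph is $C_5$-free (equivalently, it is the elementary core of the fact that co-comparability graphs are perfect, which you note as an alternative one-liner).  The only place to be careful---and you flag it---is the bookkeeping that the two $C'$-neighbors of any vertex $v_t$ are $v_{t-2}$ and $v_{t+2}$, which are indeed consecutive (hence adjacent) in the original $C_5$; this checks out for all five vertices.
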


\begin{proof}
Let $G = G(n,m,\vec{a},\vec{b})$.
Assume that there is an induced subgraph of $G$ that is isomorphic to a 5-cycle. Then there is a set of five vertices $H=\{v_1,v_2,v_3,v_4,v_5\} \subset V(G)$ where each vertex $v_i$ is adjacent to exactly two others in $H$. We will denote the column of a vertex $v_i$ as $n(v_i)$ and its row as $m(v_i)$. Without loss of generality, let $v_2$ be the rightmost vertex of $H$ and the upper one in the case more than one are in the same column.

This vertex $v_2$ is adjacent to two other vertices,
say $v_1$ and $v_3$, that must be down and to the left of $v_2$ but not adjacent to each other (i.e., neither $v_1$ nor $v_3$ is down and to the left of the other).  
 Without loss of generality let $v_1$ be the further left and/or upper of the two. There are three
possible positions for $v_3$: in the same column
as $v_1$, in the same row as $v_1$, or 
below and to the right of $v_1$, as shown in
the Figure \ref{fig:C5-free}.

\begin{figure}[h!]
    \centering
    \includegraphics[scale=0.65]{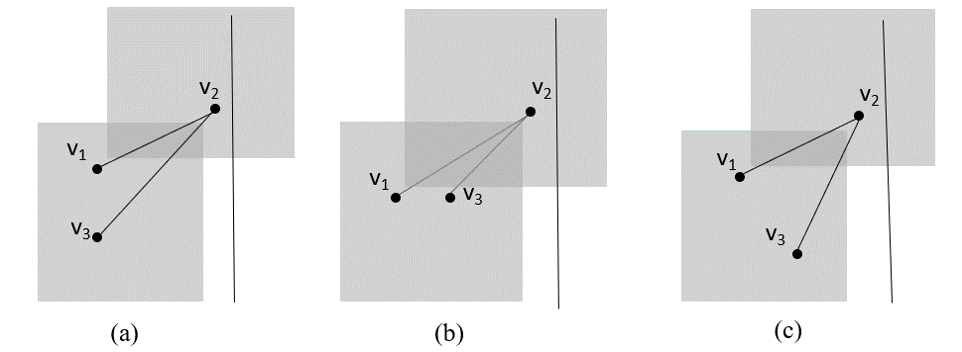}
    \caption{Three possible ways to place three vertices without an induced triangle. Grey areas represent places that are adjacent to either $v_1$ or $v_2$.}
    \label{fig:C5-free}
\end{figure}

If $v_1$ and $v_3$ are in the same column (Figure \ref{fig:C5-free} (a)), then every vertex that is adjacent to $v_1$ is either up and to the right of $v_1$ and thus also adjacent to $v_3$, or down and to the left of $v_1$ and thus also adjacent to $v_2$.  This contradicts the assumption that $\{v_1, v_2, v_3, v_4, v_5\}$ is an induced 5-cycle.

Similarly, if $v_1$ and $v_3$ are in the same row (Figure \ref{fig:C5-free} (b)), then every vertex which is adjacent to $v_3$ is adjacent to either $v_1$ or $v_2$, which is again a contradiction.

Now consider the case where $v_1$ and $v_3$ do not share a row or a column (Figure \ref{fig:C5-free} (c)).  Suppose that $v_4$ is adjacent to $v_3$ and $v_5$ is adjacent to $v_1$.  Since $v_4$ is adjacent to $v_3$ but not adjacent to $v_2$, $v_4$ must be up and to the right of $v_3$, but not down and to the left of $v_2$.  Since $v_2$ was assumed to be the vertex of the 5-cycle that is furthest right and uppermost, we must have $v_4$ in the same column as and below $v_2$.  Thus $n(v_4) = n(v_2)$ and $m(v_4)>m(v_2)$.  Similarly, since $v_5$ is adjacent to $v_1$ but not adjacent to $v_2$ we must have $m(v_5) = m(v_2)$ and $n(v_5)<n(v_2)$.  Together these imply that $n(v_4) >n(v_5)$ while $m(v_5)<m(v_4)$ which contradicts the fact that $v_4$ and $v_5$ are adjacent.  
\end{proof}

By combining Theorem \ref{thm:wellcovered}
with Theorem \ref{thm.recursivereg}, we can 
now compute the regularity of all down-left
graphs in terms of an invariant of the graph.

\begin{theorem}\label{thm:reg-downleft}
If $G = G(n,m,\vec{a},\vec{b})$ is a down-left graph, then
${\rm reg}(R/I(G)) = {\rm im}(G).$
\end{theorem}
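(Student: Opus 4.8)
The plan is to prove the equality ${\rm reg}(R/I(G)) = {\rm im}(G)$ for $G = G(m,n,\vec a,\vec b)$ by induction on the number of vertices, using the fact (Theorem~\ref{thm.recursivereg}) that regularity is computed recursively at a shedding vertex of a vertex decomposable graph, together with the known lower bound ${\rm im}(G) \le {\rm reg}(R/I(G))$ from Theorem~\ref{thm:katzmanbound}. Since down-left graphs are vertex decomposable (Theorem~\ref{thm:downleft=>vd}), we only need to establish the upper bound ${\rm reg}(R/I(G)) \le {\rm im}(G)$; combining with Katzman's bound then forces equality. It is harmless to assume $G$ is connected, since both regularity (Theorem~\ref{regconnected}) and the induced matching number are additive over connected components, and Observation~\ref{observation} lets us reduce to the case $a_i+1 < b_{i-1} \le b_i$ for all~$i$.

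First I would isolate a shedding vertex whose two derived graphs $G\setminus x$ and $G\setminus N[x]$ are again down-left graphs — for this I would reuse the vertex $x = x_{i,1}$ with $i$ the largest index with $a_i = 0$, exactly as in the proof of Theorem~\ref{thm:downleft=>vd}, where it is already verified that $G\setminus x$ and $G\setminus N[x]$ are down-left graphs on fewer vertices. Theorem~\ref{thm.recursivereg} then gives
\[
{\rm reg}(R/I(G)) = \max\{{\rm reg}(R/I(G\setminus x)),\ {\rm reg}(R/I(G\setminus N[x])) + 1\}.
\]
By the induction hypothesis the right-hand side equals $\max\{{\rm im}(G\setminus x),\ {\rm im}(G\setminus N[x]) + 1\}$. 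So the entire problem reduces to the purely combinatorial inequality
\[
\max\{{\rm im}(G\setminus x),\ {\rm im}(G\setminus N[x]) + 1\} \le {\rm im}(G),
\]
since the reverse inequality $\ge$ is automatic ($G\setminus x$ is an induced subgraph of $G$, and any induced matching of $G\setminus N[x]$ extends to one of $G$ by adding an edge incident to~$x$ — here one must check such an edge exists and stays induced, which is where the choice $x = x_{i,1}$ with $x$ having at least one neighbor, or the isolated-vertex case, matters).

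The heart of the argument is therefore showing ${\rm im}(G\setminus x) \le {\rm im}(G)$ in a way compatible with the recursion — that is, that deleting the single vertex $x = x_{i,1}$ does not lose any induced matchings that cannot be recovered. When $x$ is isolated (the case $i=1$), $G\setminus x$ and $G \setminus N[x]$ coincide and the claim is immediate. When $i > 1$, $x = x_{i,1}$ has neighbors; the key point is that $x$ can be taken to be a \emph{simplicial-like} vertex in the relevant sense, or more precisely that any induced matching of $G\setminus x$ is already an induced matching of $G$ (trivially, as an induced subgraph inclusion) \emph{and} that the "$+1$" branch is genuinely achievable: one exhibits an edge $e_0$ incident to $x$ such that $\{e_0\} \cup M$ is an induced matching of $G$ for every induced matching $M$ of $G \setminus N[x]$. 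The main obstacle I anticipate is verifying this last compatibility — ensuring that the chosen edge $e_0$ at $x$ has no edges of $G$ connecting its endpoints to the vertices matched by $M$ in $G\setminus N[x]$; this requires a careful look at the grid geometry of which vertices survive in $G\setminus N[x]$ (those to the left of column $2$ in rows $<i$, below row $i$, etc.) and may need a judicious choice of $e_0$ depending on the structure near the bottom-left corner. (This is presumably the subtle point Seyed Amin Seyed Fakhari flagged in the earlier version.) Once this combinatorial bookkeeping is done, the induction closes and the theorem follows.
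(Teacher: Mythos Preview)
Your approach is exactly the paper's: Katzman for the lower bound, induction on the number of vertices using the shedding vertex $x=x_{i,1}$ (largest $i$ with $a_i=0$), and Theorem~\ref{thm.recursivereg} to reduce to the combinatorial inequality $\max\{{\rm im}(G\setminus x),\ {\rm im}(G\setminus N[x])+1\}\le {\rm im}(G)$.

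The ``main obstacle'' you flag is precisely the content of the paper's argument, and it is resolved there with a very clean choice: take $e_0=\{x_{i,1},x_{i-1,2}\}$. The key observation is that $N[x_{i-1,2}]\subseteq N[x_{i,1}]$, because every vertex above-right of $x_{i-1,2}$ is also above-right of $x_{i,1}$, and the only vertex below-left of $x_{i-1,2}$ is $x_{i,1}$ itself. Hence for any induced matching $M$ of $G\setminus N[x]$, no vertex of $M$ is adjacent to either endpoint of $e_0$, so $M\cup\{e_0\}$ is an induced matching of $G$. One small wrinkle you glossed over: when $i>1$ it is \emph{not} automatic that $x$ has a neighbor (your assertion to that effect is false if $b_{i-1}=2$, since then rows $1,\ldots,i-1$ contain only their column-$1$ vertices and $x_{i,1}$ is isolated). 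The paper handles this edge case separately---the vertices $x_{1,1},\ldots,x_{i-1,1}$ are isolated and can be stripped off, reducing to a smaller down-left graph---and only invokes the $e_0$ argument once $b_{i-1}>2$, which guarantees $x_{i-1,2}\in V(G)$.
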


\begin{proof}
    By Theorem \ref{thm:katzmanbound}
    we have ${\rm im}(G) \leq {\rm reg}(R/I(G))$.  
    We now show ${\rm im}(G)$ is also an upper bound.
    
    Like in the proof of Theorem \ref{thm:downleft=>vd}, we can
    repeatedly apply Observation \ref{observation}
    to assume that $G$ is the disjoint union
    of smaller down-left graphs with
    $a_i+1 < b_i$ for all $i$ and $a_i+1 < b_{i-1}$ for all $2 \leq i \leq m$.
    By Lemma \ref{regconnected}, it suffices
    to prove the statement under these
    hypotheses on $\vec{a}$ and $\vec{b}$.  

    We do induction on $d$, the number of 
    vertices of $G$.  If $d=1$, then $G$ is 
    a single vertex. In this case $I(G) = \langle
    0 \rangle$, so ${\rm reg}(R/I(G)) = 0 = {\rm im}(G)$.

    Suppose that the graph has $d \geq 2$ vertices
    and the statement holds for all down-left graphs
    with fewer than $d$ vertices.  Let $i$ be the largest
    index such that $a_i=0$  (since $a_1=0$, there
    is at least one such index).  Let $x= x_{i,1}$.

    As in the proof of Theorem \ref{thm:downleft=>vd},
    if $i=1$, then $x_{i,1}$ is an isolated vertex and there are no vertices in the first column below $x_{i,1}$.  Thus $I(G\setminus x)$ is also a down-left graph and $I(G) = I(G\setminus x)$.  By
    induction we have ${\rm reg}(R/I(G)) = 
    {\rm im}(G\setminus x) = {\rm im}(G)$.

    If $i >1$, then by assumption 
    $b_{i-1} > a_i+1 =1$.  If
    $b_{i-1}=2$, then $b_1 = \cdots = b_{i-1}=2$.  
    But this means that all the vertices above
    and to the right of $x_{i,1}$ are not in 
    $G$, and $x_{1,1},x_{2,1},\ldots,x_{i-1,1}$ are isolated
    vertices.  Removing these isolated vertices
    from $G$ gives us a new down-left graph $H$
    with $I(H) = I(G)$. By induction,
    ${\rm reg}(R/I(G)) = {\rm reg}(R/I(H))
    = {\rm im}(H) = {\rm im}(G)$.

    So, suppose that $b_{i-1} > 2$.  In 
    Theorem \ref{thm:downleft=>vd} we proved that $x_{i,1}$
    is a shedding vertex, where $G\setminus x$ and
    $G \setminus N[x]$ are also down-left graphs
    on fewer vertices. Thus, by Theorem
    \ref{thm.recursivereg} and induction we have
    \begin{eqnarray*}
    {\rm reg}(R/I(G)) &=& \max\{{\rm reg}(R/I(G\setminus
    x), {\rm reg}(R/I(G\setminus N[x])+1\} \\
    &=& \max\{{\rm im}(G\setminus x), {\rm im}(G \setminus
    N[x])+1\}.
    \end{eqnarray*}
    It is clear that ${\rm im}(G \setminus x) \leq 
    {\rm im}(G)$, so it suffices to show that 
    ${\rm im}(G \setminus N[x])+1 \leq {\rm im}(G)$.
    Since $b_{i-1} > 2$, we have $x_{i-1,2}$ is 
    a vertex of $G$, and moreover, $\{x,x_{i-1,2}\}$
    is an edge of $G$ since it is above and to the right
    of $x$.  Furthermore, note that $N[x_{i-1,2}] \subseteq N[x]$ since any vertex above and
    to the right of $x_{i-1,2}$ is also above and
    to the right of $x_{i,1}$, and the only vertex
    below and to the left of $x_{i-1,2}$ is $x_{i,1}$.  But
    this implies that for any induced matching $M$ of 
    $G \setminus N[x]$, $M \cup \{\{x,x_{i-1,2}\}\}$ is
    also an induced matching of $G$.  Hence ${\rm im}(G \setminus N[x])+1 \leq {\rm im}(G)$.  This now completes
    the proof.
\end{proof}

\begin{remark}\label{removeisolatedvertices}
    Let $G$ be a graph on $V = \{x_1,\ldots,x_n,z\}$
    and suppose that $z$ is an isolated vertex of $G$.
    Then $I(G)$ and $I(G^\circ)$ have the same
    generators, but $I(G)$ is an ideal
    of $R= k[x_1,\ldots,x_n,z]$ and $I(G^\circ)$
    is an ideal of $S =k[x_1,\ldots,x_n]$.  It follows
    that ${\rm reg}(R/I(G)) = {\rm reg}(S/I(G^\circ))$.  Consequently, Theorem
    \ref{thm:reg-downleft} also applies to
    $G(n,m,\vec{a},\vec{b})^\circ$, in the 
    appropriate polynomial ring.
\end{remark}
In the special case of $G(m,n)$, we can compute
the regularity directly from $m$ and $n$.

\begin{theorem}\label{muG(n,m)}
If $G = G(m,n) = G(m,n,\vec{0},(n+1,\ldots,n+1))$,
then ${\rm reg}(R/I(G)) = \min \{m-1, n-1\}$.
\end{theorem}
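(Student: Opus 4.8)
The plan is to reduce the claim to a purely combinatorial computation of the induced matching number. Since $G(m,n)=G(m,n,\vec{0},(n+1,\ldots,n+1))$ is a down-left graph, Theorem \ref{thm:reg-downleft} gives ${\rm reg}(R/I(G))={\rm im}(G)$, so it suffices to prove that ${\rm im}(G(m,n))=\min\{m-1,n-1\}$. Set $r=\min\{m,n\}$. Throughout I identify a vertex $x_{i,j}$ with its position $(i,j)$ in the grid and recall that $x_{i,j}$ and $x_{k,\ell}$ are adjacent exactly when one of them has strictly smaller row and strictly larger column than the other.

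For the lower bound I would write down an explicit induced matching of size $r-1$: take the edges $e_s=\{x_{s,s+1},\,x_{s+1,s}\}$ for $s=1,\ldots,r-1$. Each $e_s$ is an edge of $G(m,n)$ since $s<s+1$ and $s+1>s$, and every vertex appearing exists because $s+1\le r\le m$ and $s+1\le r\le n$. To see that $\{e_1,\ldots,e_{r-1}\}$ is an induced matching, one checks for $s<s'$ that none of the four ``cross'' pairs $\{x_{s,s+1},x_{s',s'+1}\}$, $\{x_{s,s+1},x_{s'+1,s'}\}$, $\{x_{s+1,s},x_{s',s'+1}\}$, $\{x_{s+1,s},x_{s'+1,s'}\}$ meets the adjacency condition; this is a short case analysis using $s+1\le s'$ (for instance, $x_{s,s+1}$ and $x_{s'+1,s'}$ are non-adjacent because $s+1\le s'$). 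Hence ${\rm im}(G(m,n))\ge r-1=\min\{m-1,n-1\}$.

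For the upper bound I would show that any induced matching has at most $m-1$ edges and, symmetrically, at most $n-1$ edges. Write a generic induced matching as $\{e_1,\ldots,e_t\}$ with $e_s=\{x_{a_s,b_s},\,x_{c_s,d_s}\}$ where $a_s<c_s$ and $b_s>d_s$, so that $x_{a_s,b_s}$ is the ``upper-right'' endpoint of $e_s$. The key claim is that the rows $a_1,\ldots,a_t$ are pairwise distinct: if $a_s=a_{s'}$ with $b_s>b_{s'}$ for some $s\neq s'$, then $x_{a_s,b_s}$ is adjacent to $x_{c_{s'},d_{s'}}$, because $a_s=a_{s'}<c_{s'}$ and $b_s>b_{s'}>d_{s'}$; since these two vertices are endpoints of the matching but $\{x_{a_s,b_s},x_{c_{s'},d_{s'}}\}$ is not one of the matching edges, this contradicts the matching being induced. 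Moreover no $a_s$ can equal $m$, since $a_s<c_s\le m$. Thus $a_1,\ldots,a_t$ are distinct elements of $\{1,\ldots,m-1\}$, giving $t\le m-1$. Running the same argument on the columns $b_1,\ldots,b_t$ of the upper-right endpoints (they are pairwise distinct and lie in $\{2,\ldots,n\}$, as $b_s>d_s\ge 1$), or equivalently invoking the transpose isomorphism $G(m,n)\cong G(n,m)$ via $x_{i,j}\mapsto x_{j,i}$, yields $t\le n-1$. Hence ${\rm im}(G(m,n))\le\min\{m-1,n-1\}$, and combining with the lower bound finishes the proof.

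I expect the upper bound to be the only real obstacle, and within it the point is to identify the correct quantity to count: once one observes that the upper-right endpoints of the edges of an induced matching must occupy pairwise distinct rows and that the last row $m$ is unavailable to such an endpoint (and dually for columns and the first column), the bound $t\le\min\{m-1,n-1\}$ follows at once. The lower bound is essentially bookkeeping once the ``sub/super-diagonal'' matching is guessed.
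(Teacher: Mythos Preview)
Your proof is correct and follows essentially the same approach as the paper: both reduce to computing ${\rm im}(G(m,n))$ via Theorem~\ref{thm:reg-downleft}, exhibit the same ``sub/super-diagonal'' induced matching $\{\{x_{s,s+1},x_{s+1,s}\}\}_{s=1}^{r-1}$ for the lower bound, and argue for the upper bound that the upper-right (``top-right'') endpoints of an induced matching occupy distinct rows none of which can be row $m$. The only cosmetic difference is that the paper assumes $m\le n$ at the outset via $G(m,n)\cong G(n,m)$, whereas you keep both parameters and invoke the column argument (or the transpose isomorphism) at the end.
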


\begin{proof}
Because $G(m,n) \cong G(n,m)$, without loss of generality, we can assume $m\leq n$. We first claim that $$E' = \big\{ \{x_{1,2},x_{2,1}\}, \{x_{2,3},x_{3,2}\}, \dots, \{x_{m-1,m},x_{m,m-1}\} \big\} \subset E(G(m,n))$$ is an induced matching of $G(m,n)$.  Let $V'$ be the set of vertices which are endpoints of the edges in $E'$ and let $H$ be the induced subgraph on $V'$.

The vertices of $H$ have the form $x_{i,i+1}$
for $i=1,\ldots,m-1$, and $x_{j+1,j}$ for
$j=1\ldots,m-1$.  A vertex of the
form $x_{i,i+1}$ cannot be adjacent to
any vertex of the form $x_{a,a+1}$ with $a\neq i$,
since these vertices are either above $x_{i,i+1}$
and to the left
if $a<i$ and to the right and below of $x_{i,i+1}$ if $a>i$.  Similarly, none of the vertices
of the form $x_{j+1,j}$ are adjacent to each other.
Finally, suppose that $x_{i,i+1}$ is adjacent
to $x_{j+1,j}$ with $i\neq  j$  If $i < j$, then we would
have $i+1 > j > i$.  But since $i$, $j$ and $i+1$
are integers, we cannot have $j$ strictly between
two consecutive integers.  Similarly, if $j <i$, then
$j < i <j+1$, which gives a similar problem.
So, the only induced edges on $H$ are those in $E'$,
and thus ${\rm im}(G) \geq m-1$.


Let us assume for the sake of a contradiction that there exists an induced matching $B$ of $G(m,n)$ such that $|B|>m-1$.  Given an
edge $e = \{x_{i,j},x_{k,\ell}\}$ of $G(m,n)$
with $i<k$, we denote $x_{i,j}$ as the \textit{top-right vertex} for the edge  $e$ (since each edge has a top-right vertex and bottom-left vertex). Observe that the $m$-th row cannot have a top-right vertex because that would imply there is a vertex in the $(m+1)$-th or lower row, which does not exist. As $B$ contains more than $m-1$ edges, by the Pigeonhole Principle there exists a row of $G(m,n)$ containing at least 
two top-right vertices of $B$. Suppose this is the $i$-th row. Let $x_{i,j_1}$ and $x_{i,j_2}$ be these top-right vertices with $j_1 < j_2$, and let $v$ denote the vertex such that $\{x_{i,j_1},v\} \in B$. Then when we take the induced subgraph on the vertices in $B$, since $v$ is also down and to the left of $x_{i,j_2}$, we have the edge 
$\{x_{i,j_2}, v\}$ in our induced subgraph, in addition to all edges in $B$. Therefore, both $\{x_{i,j_1}, v\}$ and $\{x_{i,j_2}, v\}$ must be edges in our induced subgraph, but they are not disjoint, contradicting our assumption that $B$ was an induced matching.   

Consequently, we conclude that 
${\rm im}(G(m,n))=m-1$ when $m \leq n$.
\end{proof}

\section{An application to toric ideals of graphs}

In this section we exhibit a connection
between down-left graphs and the toric ideals
of a special family of graphs. 
Given a finite simple  graph $G=(V,E)$ with 
vertex set $V=\{x_1,\dots,x_n\}$ and edge set $E=\{e_1,\dots,e_d\}$,
we define polynomial rings on the vertex and edge sets,
that is,
$k[V]=k[x_1,\dots,x_n]$ and $k[E]=k[e_1,\dots,e_d]$ where $k$ is any field. 
We define a monomial map $\pi:k[E]\rightarrow k[V]$ by $e_i\mapsto x_{i_1}x_{i_2}$ where $e_i=\{x_{i_1},x_{i_2}\} \in E$.  The kernel of 
$\pi:k[E]\rightarrow k[V]$ is called the 
\textit{toric ideal defined by $G$} and denoted $I_G$.
 It is well-known that the
closed even walks in $G$ generate $I_G$ \cite{OH1,V}.  In particular,
the ideal $I_G$ is a homogeneous prime ideal generated by binomials.

We are interested in combinatorial formulas or bounds on the regularity of the toric ideal of a graph to build upon
work of  \cite{ADS, BOVT, HH1}.  We will restrict to the class of chordal bipartite graphs in order to take advantage of the following result of Ohsugi and Hibi.

\begin{theorem}[{\cite{OH2}}]\label{quadraticGB}
If $G$ is a bipartite graph, then the following conditions are equivalent:
\begin{enumerate}
    \item $G$ is a chordal bipartite graph.
    \item  the toric ideal $I_G$ has a quadratic Gr\"obner basis.
\end{enumerate}
\end{theorem}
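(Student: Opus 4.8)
The plan is to prove the two implications separately. The implication $(2)\Rightarrow(1)$ is short and needs only the fine grading on the edge ring; the implication $(1)\Rightarrow(2)$ is where the real content lies and requires structural information about chordal bipartite graphs.

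For $(2)\Rightarrow(1)$, I would argue by contraposition. Grade $k[E]$ by $\Z^{n}$ via $\deg e_i=\mathbf u_{i_1}+\mathbf u_{i_2}$ when $e_i=\{x_{i_1},x_{i_2}\}$; then $I_G$ is homogeneous for this grading. First note that a nonzero quadratic binomial of $I_G$ must come from a closed walk of length $4$ in $G$, hence (since $G$ is a simple graph) from a $4$-cycle; since $I_G$ is a binomial ideal, its degree-$2$ part is therefore the $k$-span $Q$ of the binomials $e_ae_b-e_ce_d$ attached to the $4$-cycles of $G$, and ``$I_G$ generated by quadrics'' is the statement $I_G=\langle Q\rangle$. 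Now suppose $G$ has an induced cycle $C$ of length $2k\ge 6$, and put $\mathbf a=\sum_{x\in V(C)}\mathbf u_x$. Because $C$ is induced, the only edges of $G$ with both endpoints in $V(C)$ are the edges of $C$, so the monomials of $k[E]$ of multidegree $\mathbf a$ are exactly the perfect matchings of $C$; an even cycle has precisely two of these and they are edge-disjoint. Hence $(I_G)_{\mathbf a}$ is one-dimensional, spanned by the degree-$k$ binomial $f_C$ of $C$. On the other hand $\langle Q\rangle_{\mathbf a}$ is spanned by the products $q\mu$ with $q\in Q$ and $\mu$ a monomial of complementary multidegree; expanding such a product forces the two perfect matchings of $C$ to share all the edges occurring in $\mu$, which is impossible unless $\mu=1$ and $k=2$. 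Thus $f_C\notin\langle Q\rangle$, so $I_G$ is not generated by quadrics and a fortiori has no quadratic Gr\"obner basis. This proves $(2)\Rightarrow(1)$.

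For $(1)\Rightarrow(2)$ I would exploit the matrix description of chordal bipartite graphs. Fix a bipartition $V=X\sqcup Y$ and let $M\in\{0,1\}^{X\times Y}$ be the bipartite adjacency matrix. By a theorem of Farber (and of Golumbic--Goss), $G$ is chordal bipartite iff $M$ is totally balanced, and by results of Lubiw and of Hoffman--Kolen--Sakarovitch this holds iff the rows and columns of $M$ can be ordered $x_1<\cdots<x_p$, $y_1<\cdots<y_q$ so that $M$ is $\Gamma$\emph{-free}: whenever $i<i'$, $j<j'$ and $x_iy_j,\,x_iy_{j'},\,x_{i'}y_j$ are all edges, then $x_{i'}y_{j'}$ is also an edge. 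Fix such an ordering. I claim that the quadratic binomials
\[
g_{i,i',j,j'}\;=\;e_{x_iy_{j'}}\,e_{x_{i'}y_j}\;-\;e_{x_iy_j}\,e_{x_{i'}y_{j'}},\qquad i<i',\ j<j',\ \{x_iy_j,x_iy_{j'},x_{i'}y_j,x_{i'}y_{j'}\}\subseteq E,
\]
form a Gr\"obner basis of $I_G$ with respect to the sorting order on $k[E]$ (in the sense of Sturmfels), for which the leading monomial of $g_{i,i',j,j'}$ is the ``unsorted'' monomial $e_{x_iy_{j'}}e_{x_{i'}y_j}$. Since each $g_{i,i',j,j'}$ lies in $I_G$ and is quadratic, the claim immediately yields $(2)$.

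The main obstacle, and the technical heart of the argument, is the verification of this Gr\"obner basis claim. Two equivalent ways to do it: either check Buchberger's criterion directly, showing that every $S$-polynomial of two of the $g_{i,i',j,j'}$ reduces to $0$ modulo the whole set --- the only nontrivial $S$-pairs come from two $4$-cycles whose leading monomials share an edge variable, i.e. two $4$-cycles sharing a row or a column, and the point is that $\Gamma$-freeness supplies exactly the extra edges needed to produce a third $4$-cycle binomial that divides the leading monomial of the $S$-polynomial; or, equivalently, prove that in each fibre of the map $\pi$ there is a \emph{unique} ``sorted'' monomial (one containing no unsorted pair of edges of a $4$-cycle of $G$), by an uncrossing/exchange argument on the edges of an arbitrary representative in the fibre --- again the swap that removes a crossing is legal precisely because of the $\Gamma$-free condition. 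Carrying either version out is a finite but somewhat delicate case analysis on the relative positions of the indices involved; an alternative route avoiding the explicit order is induction on $|V(G)|$ using the fact that every chordal bipartite graph contains a bisimplicial edge (Golumbic--Goss), but extracting the quadratic Gr\"obner basis this way still requires essentially the same bookkeeping. Either way, the combinatorial structure theorem for chordal bipartite graphs is what powers the proof, and translating that structure into the vanishing of the relevant $S$-polynomials is where the work is.
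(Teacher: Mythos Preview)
The paper does not prove this theorem; it is quoted verbatim from Ohsugi--Hibi \cite{OH2} and used as an input. The only traces of its proof that appear here are the consequences recorded in Lemma~\ref{forbiddenSubmatrix} (the row/column ordering and the forbidden $\begin{smallmatrix}1&1\\1&0\end{smallmatrix}$ submatrix), which are exactly the $\Gamma$-free ordering you invoke. So there is no ``paper's own proof'' to compare against.

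That said, your outline is correct and is essentially a reconstruction of the original Ohsugi--Hibi argument. Your $(2)\Rightarrow(1)$ is clean: the key observation that, for an induced $2k$-cycle $C$, the only monomials in $k[E]$ of multidegree $\sum_{v\in V(C)}\mathbf u_v$ are the two edge-disjoint perfect matchings of $C$, and that no $4$-cycle of $G$ can have all four vertices in $V(C)$, immediately gives $\langle Q\rangle_{\mathbf a}=0\ne (I_G)_{\mathbf a}$. For $(1)\Rightarrow(2)$, what Ohsugi--Hibi actually use is a reverse lexicographic order on the edge variables (ordered via the $\Gamma$-free arrangement of $M$), not Sturmfels' sorting order per se; the leading terms are the same ``anti-diagonal'' products $e_{x_iy_{j'}}e_{x_{i'}y_j}$ you write down, and the $\Gamma$-free condition is precisely what makes the $S$-pair reductions go through. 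Your sketch of the Buchberger check and the alternative ``unique sorted monomial in each fibre'' argument are both viable routes to the same conclusion; either would need the case analysis you acknowledge, but there is no missing idea.
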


One consequence of this result is that for a chordal bipartite graph $G$, the initial ideal of $I_G$ in the order of Ohsugi-Hibi is a squarefree quadratic monomial ideal.  In other words, the initial ideal of $I_G$ in this order is the edge ideal of a different graph $H$ (we will construct this graph $H$ momentarily). In \cite{BOVT}, Biermann, O'Keefe, and Van Tuyl use the graph $H$ to give an upper bound on the regularity of $I_G$.  Our goal in this section will be to strengthen this upper bound to an exact calculation of the regularity in the case when the chordal bipartite graph is
also $(K_{3,3}\setminus e)$-free.  

The proof of \ref{quadraticGB} is constructive in that it explicitly produces an order and the elements of the quadratic Gr\"obner basis in that order.  The following two results are consequences of the proof which we extract here as a lemma for convenience.
Recall that if $G$ is a bipartite graph
with bipartition $\{x_1,\ldots,x_m\} \cup \{y_1,\ldots,y_n\}$, the {\it biadjacency matrix} of $G$
is an $m \times n$ matrix $M$ where $M_{i,j} =1$ if $\{x_i,y_j\}$ is
an edge, and $M_{i,j} = 0$ otherwise. 

\begin{lemma}[\cite{OH2}]\label{forbiddenSubmatrix}
Let $G$ be a chordal bipartite graph with
biadjacency matrix $M$. 
\begin{enumerate}
    \item The rows and columns of $M$ 
    can be rearranged so that they are simultaneously decreasing from left to right and from top to bottom in the reverse lexicographical order. 
   \item Assume that the rows and columns of $M$ are arranged so that they are decreasing from left to right and from top to bottom in the reverse lexicographical order.  Then there is no $2\times 2$ submatrix of $M$ of the form 
$$\begin{bmatrix}
        1&1\\
        1&0
    \end{bmatrix}.$$
\end{enumerate}
\end{lemma}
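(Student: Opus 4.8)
\textbf{Proof proposal for Lemma \ref{forbiddenSubmatrix}.}

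The plan is to deduce both statements directly from the characterization of chordal bipartite graphs in terms of forbidden induced cycles, using the dictionary between induced cycles of $G$ and certain submatrices of the biadjacency matrix $M$. First I would recall the basic translation: a $2\times 2$ submatrix of $M$ with all four entries equal to $1$ corresponds to a $4$-cycle in $G$, and more generally a $2k\times 2k$-type ``alternating'' submatrix pattern corresponds to an induced $2k$-cycle; since $G$ is chordal bipartite it has no induced cycle of length $\ge 6$, so the only ``cyclic'' submatrices that can occur are the all-ones $2\times 2$ blocks. This is the structural input that makes the reordering possible.

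For part (1), I would argue that we may sort the rows of $M$ in (weakly) decreasing reverse-lexicographic order, and independently sort the columns the same way, and then check that after sorting the columns the rows are still sorted (and vice versa) — i.e.\ that the two sorting operations are compatible. The key point is a local exchange claim: if two adjacent rows $r, r'$ (with $r$ above $r'$) are out of order at some column, then using the absence of induced $6$-cycles one shows the ``staircase'' structure is forced, and swapping cannot create a new violation elsewhere. Concretely, I would phrase this as: a $0/1$ matrix can be simultaneously row- and column-sorted into reverse-lex decreasing order if and only if it has no ``cyclic obstruction,'' and chordal bipartiteness rules out exactly those obstructions. An alternative, cleaner route is to invoke the known fact (Golumbic, or via the perfect elimination ordering of chordal bipartite graphs — a doubly lexical ordering) that the biadjacency matrix of a chordal bipartite graph admits a doubly lexical ordering with no $\Gamma$-submatrix; but since the excerpt attributes everything to \cite{OH2}, I would instead cite the construction in that proof and simply record the outcome.

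For part (2), assume $M$ is already arranged as in (1), and suppose for contradiction that some $2\times 2$ submatrix, on rows $i < i'$ and columns $j < j'$, equals $\left[\begin{smallmatrix}1&1\\1&0\end{smallmatrix}\right]$, i.e.\ $M_{i,j}=M_{i,j'}=M_{i',j}=1$ and $M_{i',j'}=0$. Because row $i$ is reverse-lex at least as large as row $i'$ and they first differ in a way governed by columns, together with the column ordering constraint, I would derive that there must exist an index forcing $M_{i',j'}=1$ — the contradiction. The precise bookkeeping: the column ordering says column $j$ dominates column $j'$; restricting to rows $i$ and $i'$, column $j$ has pattern $(1,1)$ while column $j'$ has pattern $(1,0)$, so $(1,1) \ge_{\mathrm{revlex}} (1,0)$, which is consistent — so the contradiction must instead come from combining this with the \emph{global} ordering plus the absence of a larger induced cycle: the $\Gamma$-pattern, when it cannot be locally repaired, propagates to an induced cycle of length $\ge 6$ in $G$, contradicting chordal bipartiteness. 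I expect this propagation argument — showing that an unavoidable $\Gamma$-submatrix in a doubly-ordered matrix forces a long induced cycle — to be the main obstacle, since it requires carefully tracking how the reverse-lex conditions on all rows/columns interact rather than just the four entries in question. Everything else is routine translation between the matrix and the graph.
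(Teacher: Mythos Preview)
The paper does not actually prove this lemma: it is stated with the citation \cite{OH2} and introduced as ``consequences of the proof [of Theorem~\ref{quadraticGB}] which we extract here as a lemma for convenience.'' There is no proof in the paper to compare your proposal against; the authors are simply recording facts established in Ohsugi--Hibi.

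That said, a brief comment on your sketch. For part~(1) you correctly identify that the relevant structure is a doubly lexical ordering of the $0/1$ biadjacency matrix, and that such an ordering always exists (for any $0/1$ matrix, not just chordal bipartite ones); this part does not even need the chordal bipartite hypothesis, and invoking the standard doubly-lexical ordering result is the right move. For part~(2), however, your argument stalls at the point you yourself flag: after observing that the local $2\times 2$ reverse-lex comparison is consistent with the $\Gamma$-pattern, you defer to an unspecified ``propagation'' to a long induced cycle. The actual argument (in \cite{OH2}, or in the classical matrix-theoretic literature on totally balanced/$\Gamma$-free matrices) is more direct: given a forbidden $\Gamma$ on rows $i<i'$ and columns $j<j'$, one uses the doubly lexical ordering to locate a row whose entries in columns $j,j'$ witness the reverse-lex comparison between those columns, and iterating this produces either a contradiction with the ordering or an explicit induced cycle of length $\ge 6$. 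Your proposal names the obstacle but does not overcome it, so as written it is a plan rather than a proof.
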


Let $M$ be a biadjacency matrix
of a chordal bipartite graph $G$.  
For the remainder of this paper, we will assume that we have relabelled the vertices
so that the rows and columns are ordered
as in Lemma \ref{forbiddenSubmatrix} (1).
Consequently, we can
relabel the edges of $G$ as $e_{i,j}$ if
$M_{i,j} = 1$.  We now make a new graph
$H$ from the relabelled matrix $M$ as follows.  The
vertex set of $H$ is
$V(H) = \{e_{i,j} ~|~ M_{i,j} = 1\}$ and
the edge set of $H$ is 
$$E(H) =\left\{ \{e_{a,d},e_{c,b}\}
~\left|~
\begin{tabular}{l}
$a< c, b <d$ and the submatrix of $M$ 
given by \\ rows $a$ and $c$ 
and columns $b$ and $d$
is $\begin{bmatrix}
        1&1\\
        1&1
    \end{bmatrix}$
    \end{tabular}
    \right\}\right..
    $$
Note that the graph $H$ is defined in terms
of the biadjacency matrix. 

As a consequence of Theorem \ref{quadraticGB}, the
initial ideal ${\rm in}_<(I_G)$ of $I_G$ is
related to the edge ideal of the graph $H$.  Note that
this result is implicit in \cite{OH2}, but 
made explicit in \cite{BOVT}.

\begin{theorem}\label{initialideal}
    Let $G$ be a chordal bipartite graph with
    biadjacency matrix $M$.  Let $H$ be the graph constructed from $M$
    as described above.  Then there 
    is a monomial order $>$ on $k[E]$ such that
    ${\rm in}_<(I_G) = I(H)$.
\end{theorem}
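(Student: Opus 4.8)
The plan is to unwind the constructive content of Ohsugi--Hibi's proof of Theorem \ref{quadraticGB}, as recorded in Lemma \ref{forbiddenSubmatrix}, and match it against the combinatorial description of $H$. First I would fix the reverse-lexicographic monomial order $<$ on $k[V]$ guaranteed by Lemma \ref{forbiddenSubmatrix}(1), and let $>$ be the induced order on $k[E]$ under which a quadratic binomial $e_{a,b}e_{c,d} - e_{a,d}e_{c,b}$ (coming from a $2\times 2$ all-ones submatrix in rows $a<c$, columns $b<d$) has leading term the ``anti-diagonal'' monomial $e_{a,d}e_{c,b}$ — this is precisely the leading-term convention produced by the Ohsugi--Hibi Gr\"obner basis, and it is exactly the combinatorial recipe used to define $E(H)$. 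With this setup, the generators of the initial ideal ${\rm in}_<(I_G)$ obtained from the quadratic Gr\"obner basis are the monomials $e_{a,d}e_{c,b}$ over all $a<c$, $b<d$ with the indicated submatrix equal to $\begin{bmatrix}1&1\\1&1\end{bmatrix}$; and these are, by definition, exactly the monomials $x_ix_j$ with $\{i,j\} \in E(H)$ under the identification $x_{e_{i,j}} \leftrightarrow e_{i,j}$. Hence ${\rm in}_<(I_G) = I(H)$.

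The steps, in order, are: (1) state precisely which monomial order on $k[E]$ is forced by Theorem \ref{quadraticGB}'s proof (pulling back the reverse-lex order on $k[V]$ along $\pi$, after the relabelling of Lemma \ref{forbiddenSubmatrix}); (2) recall that in this order the reduced Gr\"obner basis of $I_G$ consists exactly of the quadratic binomials $B_{a,b,c,d} = e_{a,b}e_{c,d} - e_{a,d}e_{c,b}$ indexed by all-ones $2\times 2$ submatrices with $a<c$, $b<d$, with leading monomial $e_{a,d}e_{c,b}$; (3) observe that Lemma \ref{forbiddenSubmatrix}(2) — the absence of the submatrix $\begin{bmatrix}1&1\\1&0\end{bmatrix}$ — is what guarantees that whenever $e_{a,d}$ and $e_{c,b}$ are both vertices of $H$ with $a<c$, $b<d$, the remaining two entries are automatically $1$, so the edge set of $H$ really does coincide with the leading-term index set; and (4) conclude that the leading-term ideal of the Gr\"obner basis, which equals ${\rm in}_<(I_G)$ since a Gr\"obner basis' leading terms generate the initial ideal, is generated by $\{e_{a,d}e_{c,b} : \{e_{a,d},e_{c,b}\}\in E(H)\}$, i.e.\ equals $I(H)$.

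I expect the main obstacle to be step (2): extracting the precise form of the Gr\"obner basis and, in particular, verifying that the leading term is the anti-diagonal monomial $e_{a,d}e_{c,b}$ rather than the diagonal one, since this depends on carefully tracking how the reverse-lex order on the $x$-variables interacts with the chosen vertex labelling and with $\pi$. The cleanest route is probably to cite \cite{OH2} (and its transcription in \cite{BOVT}) for the existence and shape of the quadratic Gr\"obner basis and then do only the bookkeeping needed to identify its leading-term ideal with $I(H)$; the ``forbidden submatrix'' statement in Lemma \ref{forbiddenSubmatrix}(2) does the real work of showing that no spurious or missing edges appear, so once the leading-term convention is pinned down the identification ${\rm in}_<(I_G) = I(H)$ is essentially a matching of two identical index sets. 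One should also remark that we are implicitly using that a reduced Gr\"obner basis exists and generates the initial ideal (standard), and that the binomials in question are homogeneous so no subtlety arises from the grading.
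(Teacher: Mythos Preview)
The paper does not give its own proof of this theorem; it records the statement as a known consequence of \cite{OH2}, made explicit in \cite{BOVT}, and uses it as a black box. Your plan of extracting the quadratic Gr\"obner basis from \cite{OH2}, identifying its leading monomials as the anti-diagonals $e_{a,d}e_{c,b}$ of all-ones $2\times 2$ submatrices, and matching these against the definition of $E(H)$ is exactly the argument those citations point to, and steps (1), (2), (4) are correct and sufficient.

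Step (3), however, makes a false claim that is also unnecessary. It is not true that whenever $M_{a,d}=M_{c,b}=1$ with $a<c$ and $b<d$ the other two entries must equal $1$: the forbidden pattern in Lemma~\ref{forbiddenSubmatrix}(2) only forces the bottom-right entry once the other \emph{three} are already $1$. The paper's own running example (the reordered matrix in Example~\ref{runningexample}) already exhibits this: $M_{1,4}=M_{3,2}=1$ but $M_{1,2}=0$, so $e_{1,4}$ and $e_{3,2}$ are vertices of $H$ in down-left position yet not adjacent. Fortunately your proof does not need this step at all: the edge set of $H$ and the index set of the Gr\"obner-basis binomials are both \emph{defined} as the set of all-ones $2\times 2$ submatrices, so once (2) pins down the leading monomials the equality ${\rm in}_<(I_G)=I(H)$ is a literal matching of identical index sets. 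Simply delete step (3).
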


\begin{example}\label{runningexample}
We illustrate some of the above ideas with
the chordal bipartite graph given 
in Figure \ref{chordalbipartiteexample}.
This is the
same example as \cite[Example 4.6]{BOVT}.
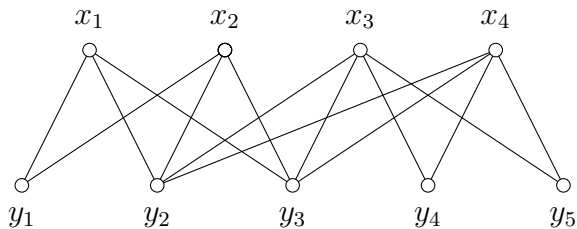
\begin{figure}[h!]
\begin{tikzpicture}[scale=.9]

\draw (1,2) -- (0,0);
\draw (1,2) -- (2,0);
\draw (1,2) -- (4,0);
\draw (3,2) -- (0,0); 
\draw (3,2) -- (2,0);
\draw (3,2) -- (4,0);
\draw (5,2) -- (2,0);
\draw (5,2) -- (4,0);
\draw (5,2) -- (6,0);
\draw (5,2) -- (8,0);
\draw (7,2) -- (2,0);
\draw (7,2) -- (4,0);
\draw (7,2) -- (6,0);
\draw (7,2) -- (8,0);

\fill[fill=white,draw=black] (1,2) circle (.1)
node[label=above:$x_1$] {};
\fill[fill=white,draw=black] (3,2)  circle (.1)circle (.1) node[label=above:$x_2$] {};
\fill[fill=white,draw=black] (5,2) circle (.1) node[label=above:$x_3$] {};
\fill[fill=white,draw=black] (7,2) circle (.1) node[label=above:$x_4$] {};
\fill[fill=white,draw=black](0,0)  circle (.1) node[label=below:$y_1$] {};
\fill[fill=white,draw=black](2,0)  circle (.1) node[label=below:$y_2$] {};
\fill[fill=white,draw=black](4,0)  circle (.1) node [label=below:$y_3$] {};
\fill[fill=white,draw=black](6,0)  circle (.1) node[label=below:$y_4$] {};
\fill[fill=white,draw=black](8,0)  circle (.1) node[label=below:$y_5$] {};

\end{tikzpicture}
\caption{A chordal bipartite graph}\label{chordalbipartiteexample}
\end{figure}
The vertex set has been partitioned as 
$\{x_1,x_2,x_3,x_4\} \cup \{y_1,y_2,y_3,y_4,y_5\}$.
Since the graph $G$ is a bipartite graph,
the biadjacency matrix $M$ is the $4 \times 5$ matrix given by
$$M = \begin{bmatrix}
1 & 1 & 1 & 0 & 0 \\
1 & 1 & 1 & 0 & 0 \\
0 & 1 & 1 & 1 & 1 \\
0 & 1 & 1 & 1 & 1
\end{bmatrix}.$$
By Lemma \ref{forbiddenSubmatrix}, we can relabel  the $x_i$'s and $y_j$'s,
so that the columns and rows of $M$ are 
arranged in reverse-lexicographical order, from
largest to smallest from top to bottom and
left to right.  In our example, the matrix
is rearranged\footnote[1]{There is a mistake in \cite[Example 4.6]{BOVT}; the rows in that
example are arranged in the 
correct reverse lexicographical order, but the columns were not rearranged in the correct order} as: 
$$M = \begin{bmatrix}
1 & 0 & 0 & 1 & 1 \\
1 & 0 & 0 & 1 & 1 \\
0 & 1 & 1 & 1 & 1 \\
0 & 1 & 1 & 1 & 1
\end{bmatrix}.$$
This relabelling 
amounts to swapping
the labels of $y_2$ and $y_4$, and those of $y_3$ and $y_5$.  Note that in accordance
to Lemma \ref{forbiddenSubmatrix} (2), the 
matrix has no submatrix of the form 
$$\begin{bmatrix}
        1&1\\
        1&0
    \end{bmatrix}.$$
We relabel our edges so  $e_{i,j}$ is connecting
$x_i$ with $y_j$.  Our new labelled
graph is given in Figure \ref{relabel}; we
have surpressed some of the edge labelling
to increase readability:
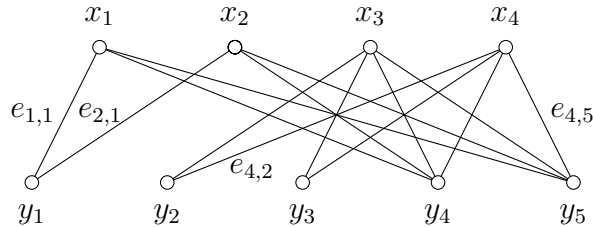
\begin{figure}[h!]
\begin{tikzpicture}[scale=.9]

\draw (1,2) -- (0,0) node[midway, left] {$e_{1,1}$};
\draw (1,2) -- (6,0);
\draw (1,2) -- (8,0);
\draw (3,2) -- (0,0) node[midway,left] {$e_{2,1}$}; 
\draw (3,2) -- (6,0);
\draw (3,2) -- (8,0);
\draw (5,2) -- (6,0);
\draw (5,2) -- (8,0);
\draw (5,2) -- (2,0);
\draw (5,2) -- (4,0);
\draw (7,2) -- (6,0);
\draw (7,2) -- (8,0)node[midway,right] {$e_{4,5}$};;
\draw (7,2) -- (2,0) node[near end,below]{$e_{4,2}$} ;
\draw (7,2) -- (4,0);

\fill[fill=white,draw=black] (1,2) circle (.1)
node[label=above:$x_1$] {};
\fill[fill=white,draw=black] (3,2)  circle (.1)circle (.1) node[label=above:$x_2$] {};
\fill[fill=white,draw=black] (5,2) circle (.1) node[label=above:$x_3$] {};
\fill[fill=white,draw=black] (7,2) circle (.1) node[label=above:$x_4$] {};
\fill[fill=white,draw=black](0,0)  circle (.1) node[label=below:$y_1$] {};
\fill[fill=white,draw=black](2,0)  circle (.1) node[label=below:$y_2$] {};
\fill[fill=white,draw=black](4,0)  circle (.1) node [label=below:$y_3$] {};
\fill[fill=white,draw=black](6,0)  circle (.1) node[label=below:$y_4$] {};
\fill[fill=white,draw=black](8,0)  circle (.1) node[label=below:$y_5$] {};

\end{tikzpicture}
\caption{The relabelled chordal bipartite graph with some of the edges labelled}\label{relabel}
\end{figure}

We now construct the graph $H$ from the matrix
$M$.  We place our vertices in a $4 \times 5$ grid,
where vertex $e_{i,j}$ is in row $i$ and column $j$ 
(similar to matrix notation).  We then join 
$e_{a,d}$ to $e_{c,b}$ if the corresponding
submatrix of rows $a$ and $c$ and columns $b$ and $d$
consists only of ones with
$a< c$ and $b<d$. The graph $H$ is given
in Figure \ref{graphH}.
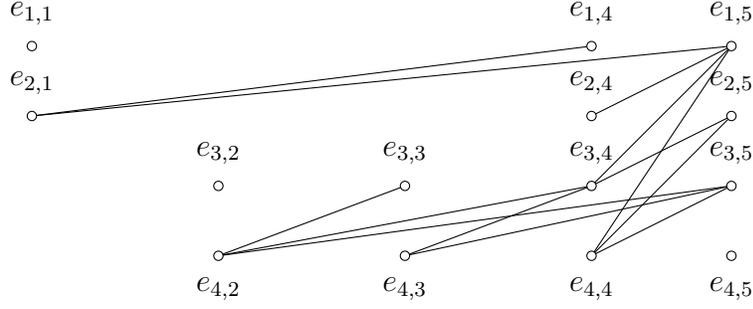
\begin{figure}[h!]
\begin{tikzpicture}[scale=0.31]

\draw (0,6) -- (24,9);
\draw (0,6) -- (30,9);

\draw (8,0) -- (16,3);
\draw (8,0) -- (24,3);
\draw (8,0) -- (30,3);

\draw (16,0) -- (24,3);
\draw (16,0) -- (30,3);

\draw (24,6) -- (30,9);
\draw (24,3) -- (30,9);
\draw (24,3) -- (30,6);
\draw (24,0) -- (30,9);
\draw (24,0) -- (30,3);
\draw (24,0) -- (30,6);



\fill[fill=white,draw=black] (0,6) circle (.2)
node[label=above:$e_{2,1}$] {};
\fill[fill=white,draw=black] (0,9) circle (.2)
node[label=above:$e_{1,1}$] {};

\fill[fill=white,draw=black] (8,0) circle (.2)
node[label=below:$e_{4,2}$] {};
\fill[fill=white,draw=black] (8,3) circle (.2)
node[label=above:$e_{3,2}$] {};

\fill[fill=white,draw=black] (16,0) circle (.2)
node[label=below:$e_{4,3}$] {};
\fill[fill=white,draw=black] (16,3) circle (.2)
node[label=above:$e_{3,3}$] {};

\fill[fill=white,draw=black] (24,0) circle (.2)
node[label=below:$e_{4,4}$] {};
\fill[fill=white,draw=black] (24,3) circle (.2)
node[label=above:$e_{3,4}$] {};
\fill[fill=white,draw=black] (24,6) circle (.2)
node[label=above:$e_{2,4}$] {};
\fill[fill=white,draw=black] (24,9) circle (.2)
node[label=above:$e_{1,4}$] {};

\fill[fill=white,draw=black] (30,0) circle (.2)
node[label=below:$e_{4,5}$] {};
\fill[fill=white,draw=black] (30,3) circle (.2)
node[label=above:$e_{3,5}$] {};
\fill[fill=white,draw=black] (30,6) circle (.2)
node[label=above:$e_{2,5}$] {};
\fill[fill=white,draw=black] (30,9) circle (.2)
node[label=above:$e_{1,5}$] {};;
\end{tikzpicture}
\caption{The graph $H$ constructed from $M$}\label{graphH}
\end{figure}
By Theorem \ref{initialideal}, there is 
a monomial ordering of $k[E]$ so that 
\begin{multline*}
{\rm in}_<(I_G) = I(H) =  \langle 
e_{2,\,1}e_{1,\,4},\; e_{2,\,1}e_{1,\,5},\; e_{2,\,4}e_{1,\,5},\; e_{3,\,4}e_{1,\,5},\; e_{4,\,4}e_{1,\,5},\; e_{3,\,4}e_{2,\,5},\; e_{4,\,4}e_{2,\,5}, \\
e_{4,\,2}e_{3,\,3}, \; e_{4,\,2}e_{3,\,4},\; e_{4,\,3}e_{3,\,4},\; e_{4,\,2}e_{3,\,5},\; e_{4,\,3}e_{3,\,5},\; e_{4,\,4}e_{3,\,5}
\rangle.
\end{multline*}
\end{example}

\begin{remark}
    The graph $H$ constructed from the 
    biadjacency matrix $M$ of the chordal 
    bipartite graph $G$ is similiar to
    a down-left graph.  In particular, if
    $e_{a,d}$ and $e_{c,b}$ are adjacent,
    then $e_{c,b}$ is down and to the left
    of $e_{a,d}$ if we place the vertices in 
    a grid.  Although similar, the graph
    $H$ may not  be a down-left graph.  
    In the above example, $e_{3,3}$ is down and
    left of $e_{2,4}$, but they are not adjacent
    since the corresponding submatrix made from
    rows $2$ and $3$ and columns $3$ and $4$ 
    contains a zero entry.  Moving forward,
    we want to find conditions on $M$ so that
    the resulting graph $H$ is also a down-left
    graph.
\end{remark}

The following is a technical lemma concerning the biadjacency matrix of a $(K_{3,3}\setminus e)$-free chordal bipartite graph which we will need in order to prove our main result. 

\begin{lemma} \label{lemma:techy}
Let $M$ be the $m\times n$ biadjacency matrix of a chordal bipartite $G$  that is also
$(K_{3,3}\setminus e)$-free.  Assume that the columns and rows of $M$
are ordered as in Lemma \ref{forbiddenSubmatrix} (1).
Let $M'$ be any $m' \times n'$ submatrix of $M$ with $m',n' \geq 2$ such that the entries of $M'$ are all 1's. Let $R$ be the set of indices of the rows of $M'$ as a submatrix of $M$ and let $C$ be the set of indices of the columns. 
Let $(s,t)$ be such that 
\begin{enumerate}
    \item $M_{s,t} = 1$,
    \item either $s \notin R$ or $t \notin C$, and
    \item there is some $r \in R$ and some $c \in C$ such that either $s<r$ and $t>c$, or $s>r$ 
    and $t<c$ with $M_{r,c} = M_{s,c} = M_{r,t} = 1$ and such that $(r,c) \neq (\max(R), \max(C))$.
\end{enumerate}
Let $M''$ be the submatrix of $M$ defined by the rows indexed by $R\cup\{s\}$ and columns indexed by $C\cup \{t\}$.  Then the entries of $M''$ are all 1.  
\end{lemma}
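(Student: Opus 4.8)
My plan is to exploit the two structural facts in play: the forbidden pattern $\begin{bmatrix}1&1\\1&0\end{bmatrix}$ of Lemma \ref{forbiddenSubmatrix}(2), which lets me fill in a missing bottom-right corner of an all-$1$ $2\times 2$ pattern, and the $(K_{3,3}\setminus e)$-freeness of $G$, which says in matrix language that no $3\times 3$ submatrix of $M$ has exactly eight $1$'s. First I would make two reductions. Transposing $M$ preserves chordal bipartiteness, $(K_{3,3}\setminus e)$-freeness, the ordering of Lemma \ref{forbiddenSubmatrix}, and the statement to be proved, while interchanging the two alternatives in hypothesis (3); so I may assume $s>r$ and $t<c$. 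By hypothesis (2) at least one of $s\notin R$, $t\notin C$ holds; I would treat $s\notin R$ and $t\notin C$ in detail, and remark that when $s\in R$ (resp.\ $t\in C$) the row (resp.\ column) side of $M''$ below is already all $1$'s from $M'$, so only half of the argument is needed.

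Combining (1) and (3), the entries $M_{r,t},M_{r,c},M_{s,t},M_{s,c}$ all equal $1$ with $r<s$ and $t<c$, so the $2\times 2$ submatrix on rows $\{r,s\}$ and columns $\{t,c\}$ is all $1$'s; together with $M'$ this gives two all-$1$ blocks meeting at $(r,c)$. Set $C^{+}=\{c'\in C:c'>t\}\ni c$ and $C^{-}=\{c'\in C:c'<t\}$. It suffices to show $M_{s,c'}=1$ for all $c'\in C$ and $M_{\rho,t}=1$ for all $\rho\in R$. For $c'\in C^{+}$ the submatrix on rows $r<s$ and columns $t<c'$ equals $\begin{bmatrix}1&1\\1&M_{s,c'}\end{bmatrix}$, so Lemma \ref{forbiddenSubmatrix}(2) forces $M_{s,c'}=1$; dually, if $C^{-}\neq\emptyset$, fixing $c''\in C^{-}$, for $\rho\in R$ with $\rho>r$ the submatrix on rows $r<\rho$ and columns $c''<t$ equals $\begin{bmatrix}1&1\\1&M_{\rho,t}\end{bmatrix}$, forcing $M_{\rho,t}=1$.

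The remaining entries — $M_{s,c'}$ for $c'\in C^{-}$ and $M_{\rho,t}$ for $\rho<r$ in $R$ — I would obtain from $(K_{3,3}\setminus e)$-freeness, using $(r,c)\neq(\max R,\max C)$ to supply the extra row or column needed for a $3\times 3$ submatrix. If $c<\max C$, fix $c^{+}\in C$ with $c^{+}>c$ (so $c^{+}\in C^{+}$ and $M_{s,c^{+}}=1$ by the previous step); then for any $\rho\in R\setminus\{r\}$ with $M_{\rho,t}=0$, the $3\times 3$ submatrix on the distinct rows $\rho,r,s$ and distinct columns $t,c,c^{+}$ has exactly eight $1$'s, i.e.\ induces a $K_{3,3}\setminus e$ in $G$ — contradiction; hence $M_{\rho,t}=1$ for all $\rho\in R$, and then for $c'\in C^{-}$ with $M_{s,c'}=0$ the submatrix on rows $r,\rho,s$ (any $\rho\in R\setminus\{r\}$) and columns $c',t,c$ again has eight $1$'s, a contradiction. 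If instead $c=\max C$, then necessarily $r<\max R$; fixing $r^{+}\in R$ with $r^{+}>r$, I would run the mirror-image argument with $r^{+}$ playing the role of $c^{+}$ — using Lemma \ref{forbiddenSubmatrix}(2) to get $M_{r^{+},t}=1$ first when $C^{-}\neq\emptyset$, and arguing directly when $C^{-}=\emptyset$ (where $M_{s,c'}=1$ for all $c'\in C$ is already known) — to fill in everything.

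I expect the bookkeeping of the last paragraph to be the main obstacle: the unfilled entries have to be settled in a specific order because each $3\times 3$ argument uses $1$'s established earlier, and in every subcase one must check that the three chosen rows and three chosen columns are pairwise distinct and that the submatrix has exactly eight (not nine, not seven) $1$'s. The dichotomy $c<\max C$ versus $c=\max C$ is not symmetric once the orientation has been normalized, so the mirror-image case genuinely has to be written out, including the degenerate sub-subcase $C^{-}=\emptyset$ in which Lemma \ref{forbiddenSubmatrix}(2) is unavailable and one argues purely with $(K_{3,3}\setminus e)$-freeness.
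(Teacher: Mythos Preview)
Your proposal is correct and uses the same two tools as the paper --- Lemma~\ref{forbiddenSubmatrix}(2) to force bottom-right $1$'s, and $(K_{3,3}\setminus e)$-freeness to forbid $3\times 3$ submatrices with exactly eight $1$'s --- together with the same transpose reduction to $r<s$, $t<c$. The difference is purely organizational. The paper opens with a \emph{propagation} reduction: once you find a single extra $1$ in row $s$ (some $M_{s,j}=1$ with $j\in C\setminus\{c\}$) or in column $t$ (some $M_{i,t}=1$ with $i\in R\setminus\{r\}$), repeated $3\times 3$ arguments fill in everything else. This collapses the subsequent case analysis to a two-case search for one such entry (either $C^{+}\setminus\{c\}\neq\emptyset$, or else $C^{-}\neq\emptyset$ and $r<\max R$ by the hypothesis $(r,c)\neq(\max R,\max C)$), after which propagation finishes. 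You instead fill in all of the new entries explicitly, which costs you the finer dichotomy $c<\max C$ versus $c=\max C$ and the sub-subcase $C^{-}=\emptyset$; the bookkeeping you flag as the main obstacle is precisely what the paper's propagation step avoids. Both routes are valid; the paper's is shorter, while yours makes the order in which entries are forced more transparent.
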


\begin{proof}

We need to prove that all the entries of $M''$ in row $s$ and column $t$ are non-zero.  It is sufficient to show that there is one element of row $s$ in $M''$ (other than $M_{s,t}$ and  $M_{s,c}$) which is non-zero or that there is one element of column $t$ (other than $M_{s,t}$ and $M_{r,t}$) which is non-zero.  To see this, assume without loss of generality that there exists some $r' \neq r$ with $r' \in R$ such that $M_{r', t} = 1$.  Then the submatrix of $M$ induced by the rows $s$, $r$, and $r'$ and columns $t$, $c$, and $j$ for any $j$ in $C$ contains eight entries equal to 1.  The final entry $M_{s,j}$ must be 1 as well since otherwise this submatrix is the biadjacency matrix of $K_{3,3} \setminus e$ and by assumption $G$ is $(K_{3,3}\setminus e)$-free.  This shows that all the entries in row $s$ in $M''$ are equal to 1.  A similar argument can then be applied to show all the entries of $M''$ in column $t$ are also equal to 1 and thus $M''$ is a matrix of 1's.  

Note that if either $s \in R$ or $t \in C$ then we are done since, for example, if $s \in R$ then we know automatically that every element in row $s$ of $M''$ is equal to 1.  Then either, there are only the two rows of $M''$ ($r$ and $s$) and we know that all the entries of $M''$ are 1, or $M''$ has at least three rows.  By assumption, we also know that $C$ contains at least two elements and thus $M''$ has at least three columns.  By the above argument, any remaining entries in column $t$ of $M''$ must be 1 to avoid a $3\times 3$ submatrix of $M$ with eight entries equal to 1 and one 0.  

Now suppose that $s \notin R$ and $t \notin C$.  Since taking the transpose of $M$ gives us the biadjacency matrix of an isomorphic graph, we may assume without loss of generality that $r<s$ and $t<c$.  

 \emph{Case 1:} Suppose there exists some $j \in C$ with $j >t$ and $j \neq c$.  This implies that
 $M_{r,j} = 1$. Then if $M_{s,j} = 0$ the submatrix on rows $r$ and $s$ and columns $t$ and $j$ contradicts Lemma \ref{forbiddenSubmatrix}.  Therefore we must have $M_{s,j} = 1$.  As discussed above, this is sufficient to show that $M''$ consists of all 1's.  

\emph{Case 2:} If there is no $j \in C$ with $j \neq c$ and $j >t$, then this means that columns $t$ and $c$ are the right-most two columns in $M''$.  Thus there exists some $j \in C$ with $j<c, t$.  

Now suppose that there exists an $i \in R$ such that $i>r$.  If $M_{i,t}=0$ then the submatrix induced by rows $r$ and $i$ and columns $j$ and $t$ contradicts Lemma \ref{forbiddenSubmatrix}.  Therefore we must have $M_{i,t} = 1$ and again by the above argument we have all the entries in $M''$ are 1.  

If on the other hand, there is no $i \in R$ such that $i >r$ then the rows $r$ and $s$ are the bottom-most rows in $M''$.  This together with the fact that columns $t$ and $c$ are the right-most in $M''$ means that $(r,c) = (\max{R}, \max{C})$ which contradicts our assumptions.
\end{proof}

\begin{example}\label{notk33efree}
The chordal bipartite graph of Example \ref{runningexample} is not $(K_{3,3}\setminus e)$-free since the induced graph on
$\{x_1,x_2,x_3,y_1,y_2,y_3\}$ in Figure \ref{chordalbipartiteexample} is a $K_{3,3}
\setminus e$.  
We show how the biadjacency
matrix does not satisfy the previous lemma
because the graph is not $(K_{3,3}\setminus e)$-free.
Note that the $2 \times 2$
submatrix of rows $3$ and $4$ and columns
$3$ and $4$ consists only of ones. Thus
$R = \{3,4\}$ and $C = \{3,4\}$.  We
have $M_{s,t} =M_{2,5} =1$ with $s= 2 \notin R$ and
$t= 5 \notin C$. Now
consider $r=3 > 2=s$ and $t=5>4=c$.  We have
$M_{3,4}=1 = M_{2,4} = M_{3,5}$.  So,
all three conditions of the previous lemma are
met, but the conclusion is false since
the submatrix consisting of rows $R \cup \{2\}$
and $C \cup \{5\}$ has a zero entry.
\end{example}
We are now prepared to prove our main theorem in this section, which says that the graph the $H$ constructed
from $G$ consists only of isolated vertices
or connected components that are  down-left
graphs of the form  $G(m,n)$ with the
isolated vertices removed.

\begin{theorem}\label{connectedComponents}
Let $G$ be a chordal bipartite graph, whose 
biadjacency matrix $M$ is ordered as in Lemma \ref{forbiddenSubmatrix},
and let $H$ be the corresponding 
graph constructed from $M$. If $G$ is also $(K_{3,3}\setminus e)$-free,
then each connected component of $H$ is either an isolated vertex or isomorphic to $G(m, n)^\circ$ for some integers $m,n \geq 2$. 
\end{theorem}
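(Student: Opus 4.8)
The plan is to reduce the statement to the structure of the down-left graphs $G(m,n)$. Fix a connected component $K$ of $H$. If $K$ has a single vertex then it is an isolated vertex of $H$ and we are done, so assume $|V(K)| \geq 2$. Since $K$ is connected it has an edge, and every edge of $H$ joins two vertices lying in distinct rows and distinct columns; writing $R$ (resp. $C$) for the set of row indices (resp. column indices) in $M$ of the vertices of $K$, this forces $m := |R| \geq 2$ and $n := |C| \geq 2$.

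The key claim is that the submatrix $M[R,C]$ of $M$ on the rows indexed by $R$ and the columns indexed by $C$ has all entries equal to $1$. Granting this, each $e_{r,c}$ with $r \in R$ and $c \in C$ is a vertex of $H$, and since every $2\times 2$ submatrix of $M[R,C]$ is a block of ones, the construction of $H$ shows that the induced subgraph $H[S]$ on $S := \{e_{r,c} \mid r \in R,\ c \in C\}$ is isomorphic to $G(m,n)$ (list $R$ and $C$ in increasing order and send $e_{r_i,c_j} \mapsto x_{i,j}$). Now $K \subseteq S$, no vertex of $S\setminus K$ is $H$-adjacent to a vertex of $K$, and every path in $H$ between vertices of $K$ stays in $K \subseteq S$; hence $K$ is a connected component of $H[S]\cong G(m,n)$. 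A direct check shows that, when $m,n\geq 2$, the only isolated vertices of $G(m,n)$ are $x_{1,1}$ and $x_{m,n}$, and that $G(m,n)^\circ$ is connected (indeed $x_{1,n}$ and $x_{m,1}$ are adjacent, and every other non-isolated vertex is adjacent to one of them), so the connected components of $G(m,n)$ are $\{x_{1,1}\}$, $\{x_{m,n}\}$, and $G(m,n)^\circ$. As $|V(K)| \geq 2$, this forces $K \cong G(m,n)^\circ$.

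It remains to prove the key claim, and here I would argue by ``growing'' an all-ones submatrix using Lemma \ref{lemma:techy}. Any edge $\{e_{a,d},e_{c,b}\}$ of $K$ supplies a $2\times 2$ all-ones submatrix $M[R_0,C_0]$ with $R_0 = \{a,c\}\subseteq R$ and $C_0 = \{b,d\}\subseteq C$. Fix a spanning tree of $K$ and traverse it outward from this edge: whenever we reach a vertex $v = e_{s,t}$ of $K$ whose tree-neighbour $u = e_{s',t'}$ closer to the starting edge has already been incorporated into the current all-ones submatrix $M[R_{\mathrm{cur}},C_{\mathrm{cur}}]$, the edge $\{u,v\}$ of $H$ yields a $2\times 2$ block of ones on rows $\{s,s'\}$ and columns $\{t,t'\}$, which is precisely the input needed to invoke Lemma \ref{lemma:techy} with the point $(s,t)$ and witness $(r,c) = (s',t')$, thereby enlarging the current all-ones submatrix so that it contains row $s$ and column $t$. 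Running over all of $K$ then forces $R_{\mathrm{cur}}$ to exhaust $R$ and $C_{\mathrm{cur}}$ to exhaust $C$, which is the claim.

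I expect the main obstacle to be the last clause of hypothesis (3) of Lemma \ref{lemma:techy}, namely $(r,c)\neq(\max R_{\mathrm{cur}},\max C_{\mathrm{cur}})$: the natural witness $(s',t')$ may happen to be the bottom-right corner of the current grid. In that case one must instead exhibit a different witness inside $M[R_{\mathrm{cur}},C_{\mathrm{cur}}]$; this should be possible because the current grid has at least two rows and at least two columns, and because the sorted form of $M$ from Lemma \ref{forbiddenSubmatrix}(2) forces the relevant entries of $M$ to equal $1$. Handling this boundary case, together with the bookkeeping that the traversal genuinely visits every row and column appearing in $K$, is the only technical part of the argument; everything else is formal.
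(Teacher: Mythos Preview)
Your plan is correct and rests on the same idea as the paper's proof—both use Lemma~\ref{lemma:techy} to show that the rows and columns meeting a non-trivial component support an all-ones submatrix, and then identify the component with a $G(m,n)^\circ$. The paper's execution is a bit different: instead of growing the block along a spanning tree, it fixes a \emph{maximal} all-ones submatrix $M''$ containing a given edge, notes that the induced graph on $M''$ is a copy of $G(a,b)$, and argues by contradiction—if the component strictly contains $G(a,b)^\circ$, there is an edge from a vertex $v$ of $G(a,b)^\circ$ to some $v^*$ outside $M''$, and Lemma~\ref{lemma:techy} then enlarges $M''$, contradicting maximality. The payoff of the maximality device is precisely that it handles your corner worry for free: since the bottom-right entry of $M''$ corresponds to an isolated vertex of $G(a,b)$, the vertex $v$ is automatically not at $(\max R,\max C)$, so the side condition in Lemma~\ref{lemma:techy} is met.

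That said, the obstacle you flag never actually occurs in your spanning-tree argument either. If $e_{s',t'}$ is one of the two starting vertices of the initial edge $\{e_{a,d},e_{c,b}\}$, then $(s',t')\in\{(a,d),(c,b)\}$, neither of which equals the corner $(\max R_0,\max C_0)=(c,d)$. If $e_{s',t'}$ is processed later via its own tree-parent $e_{p,q}$, the edge relation forces $s'<p$ or $t'<q$; since $p$ and $q$ already lie in the current grid, this gives $s'<\max R$ or $t'<\max C$ at that step, and because $\max R$ and $\max C$ are non-decreasing along the traversal, the inequality persists thereafter. Hence the tree-parent witness $(s',t')$ is always admissible in Lemma~\ref{lemma:techy}, and no alternate witness is needed.
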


\begin{proof}
Assume for the sake of contradiction that there is a connected component $H'$ of $H$ that is not an isolated vertex and is not isomorphic to $G(m,n)^\circ$ for any $m,n \geq 2$.  Recall that the vertices of $H'$ correspond to 1's in $M$. Let $M'$ be the submatrix of $M$ that is given by the rows and columns which contain the vertices of $H'$.

Each edge in $H'$ corresponds to a $2\times 2$ submatrix of $M'$ whose entries are all 1.  Let $e$ be any edge in $H'$ and let $M''$ be a maximal submatrix of $M'$ which contains the $2 \times 2$ submatrix corresponding to $e$ and whose entries are all 1.      Let $H''$ be the induced subgraph of $H'$ on the vertices corresponding to the entries of $M''$.  Then if $M''$ is an $a\times b$ matrix, $H''$ is isomorphic to $G(a,b)^\circ \cup \{z_1,z_2\}$, where
$z_1$ and $z_2$ are the two isolated vertices
of $G(a,b)$.  Note that $a, b \geq 2$ since $H'$ contains at least one edge.
 Since $H'$ is a connected component of $H$ and $H'$ contains at least one edge of $G(a,b)^\circ$, $G(a,b)^\circ$ is a proper subgraph of $H'$.  Thus there must be some edge $\{v,v^*\}$ in $H'$ with $v$ a vertex of $G(a,b)^\circ$ and $v^*$ not a vertex of $H''$. 
 

 By Lemma \ref{lemma:techy}, all of the entries of the submatrix of $M$ which consists of $M''$ plus the row and/or column containing $v^*$ are $1$.  This contradicts the fact that we chose $M''$ to be a maximal submatrix of all $1$'s. Therefore, every component of $H$ that is not an isolated vertex is isomorphic to $G(m,n)^\circ$ for some $m,n \geq 2$.
\end{proof}

Putting our results together, we can now give
an exact formula for the regularity of toric
ideals of chordal bipartite graphs that
are $(K_{3,3}\setminus e)$-free.

\begin{theorem}\label{thm:reg-chordalbipartite}
Let $G$ be a chordal
bipartite graph that is
$(K_{3,3,} \setminus e)$-free, and let 
$H$ be the graph constructed from
the biadjancey matrix $M$ of $G$. Since
$H$ has the form 
$H = G(m_1,n_1)^\circ \sqcup\cdots\sqcup G(m_s,n_s)^\circ \sqcup Y$ where $Y$ denotes
the set of isolated vertices, then
$${\rm reg}(k[E]/I_G) = 
\sum_{i=1}^s \min\{m_i-1,n_i-1\}.$$
\end{theorem}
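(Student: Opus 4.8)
The plan is to chain together four of the earlier results, so the argument is mostly a matter of bookkeeping with ambient polynomial rings. The starting point will be Theorem \ref{initialideal}: there is a monomial order $>$ on $k[E]$ for which ${\rm in}_>(I_G) = I(H)$. Since $I(H)$ is an edge ideal it is a square-free monomial ideal, and $I_G$ is a homogeneous ideal, so Theorem \ref{regSqfreeInitialIdeal} (Conca--Varbaro) applies and gives
$${\rm reg}(k[E]/I_G) = {\rm reg}(k[E]/I(H)).$$
This reduces the theorem to computing the regularity of the edge ideal of the graph $H$ inside its natural polynomial ring $k[E]$.

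Next I would use Theorem \ref{connectedComponents}: because $G$ is $(K_{3,3}\setminus e)$-free, every connected component of $H$ is either an isolated vertex or is isomorphic to $G(m_i,n_i)^\circ$ for some $m_i,n_i \geq 2$; this is precisely the asserted decomposition $H = G(m_1,n_1)^\circ \sqcup \cdots \sqcup G(m_s,n_s)^\circ \sqcup Y$ with $Y$ the set of isolated vertices. Applying Theorem \ref{regconnected} inductively over the connected components of $H$, and noting that the isolated vertices of $Y$ contribute nothing since $I(Y)$ is the zero ideal and hence ${\rm reg}(k[E]/I(Y)) = 0$, one obtains
$${\rm reg}(k[E]/I(H)) = \sum_{i=1}^{s} {\rm reg}\bigl(k[E]/I(G(m_i,n_i)^\circ)\bigr).$$
Finally I would evaluate each summand: by Remark \ref{removeisolatedvertices}, deleting the (always present) isolated vertices of $G(m_i,n_i)$ does not change the regularity of the quotient by its edge ideal, so this equals ${\rm reg}(R/I(G(m_i,n_i)))$ in the appropriate polynomial ring, which by Theorem \ref{muG(n,m)} is $\min\{m_i-1,\,n_i-1\}$. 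Substituting back yields the claimed formula.

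I do not expect a genuine obstacle here; the one point that needs care is keeping the ambient rings straight. The regularity in the statement is taken over $k[E]$, whose variables are indexed by all edges of $G$, equivalently by all vertices of $H$ including the isolated ones, whereas Theorems \ref{muG(n,m)} and \ref{thm:reg-downleft} are phrased for the graphs $G(m,n)$ and $G(m,n)^\circ$ in their own polynomial rings; Remark \ref{removeisolatedvertices} is exactly what licenses moving freely between these rings and between a component of $H$, its copy of $G(m_i,n_i)^\circ$, and $G(m_i,n_i)$ without changing the regularity. Once that observation is made explicit, the chain of equalities above is immediate.
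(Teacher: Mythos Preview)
Your proposal is correct and follows essentially the same approach as the paper: apply Conca--Varbaro (Theorem \ref{regSqfreeInitialIdeal}) via Theorem \ref{initialideal} to reduce to ${\rm reg}(k[E]/I(H))$, then invoke Theorem \ref{connectedComponents}, Theorem \ref{regconnected}, Remark \ref{removeisolatedvertices}, and Theorem \ref{muG(n,m)} in turn. Your added care about the ambient polynomial rings is a nice touch but does not depart from the paper's argument.
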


\begin{proof}
Since the initial ideal ${\rm in}_<(I_G) = I(H)$ is a square-free quadratic monomial ideal, we know by Theorem \ref{regSqfreeInitialIdeal} that ${\rm reg}(k[E]/I_G) = {\rm reg}(k[E]/I(H))$.  The result then follows from Theorem \ref{connectedComponents}, Theorem \ref{muG(n,m)} and Remark \ref{removeisolatedvertices}, and Theorem \ref{regconnected}.
\end{proof}

\begin{example}
As noted in Example \ref{notk33efree},
our running example of Example \ref{runningexample}
is not a $(K_{3,3}\setminus e)$-free graph.
In fact,
any $3 \times 3$ submatrix of our biadjacency matrix
with exactly eight $1$'s and one $0$ corresponds
to an induced $K_{3,3}\setminus e$. 

Now remove the edges $\{x_2,y_4\}, \{x_3,y_3\}$ and
$\{x_4,y_2\}$ from this graph (again, using
the relabeled graph) to create the graph
in Figure \ref{edgesremoved}.
\begin{figure}[h!]
\begin{tikzpicture}[scale=.9]

\draw (1,2) -- (0,0);
\draw (1,2) -- (6,0);
\draw (1,2) -- (8,0);
\draw (3,2) -- (0,0); 
\draw (3,2) -- (8,0);
\draw (5,2) -- (6,0);
\draw (5,2) -- (8,0);
\draw (5,2) -- (2,0);
\draw (7,2) -- (6,0);
\draw (7,2) -- (8,0);
\draw (7,2) -- (4,0);

\fill[fill=white,draw=black] (1,2) circle (.1)
node[label=above:$x_1$] {};
\fill[fill=white,draw=black] (3,2)  circle (.1)circle (.1) node[label=above:$x_2$] {};
\fill[fill=white,draw=black] (5,2) circle (.1) node[label=above:$x_3$] {};
\fill[fill=white,draw=black] (7,2) circle (.1) node[label=above:$x_4$] {};
\fill[fill=white,draw=black](0,0)  circle (.1) node[label=below:$y_1$] {};
\fill[fill=white,draw=black](2,0)  circle (.1) node[label=below:$y_2$] {};
\fill[fill=white,draw=black](4,0)  circle (.1) node [label=below:$y_3$] {};
\fill[fill=white,draw=black](6,0)  circle (.1) node[label=below:$y_4$] {};
\fill[fill=white,draw=black](8,0)  circle (.1) node[label=below:$y_5$] {};

\end{tikzpicture}
\caption{The relabelled chordal bipartite graph with some of the edges labelled}\label{edgesremoved}
\end{figure}
The new biadjacency matrix arranged in reverse-lexicographical order from
largest to smallest from top to bottom and
left to right is given by
$$M' = \begin{bmatrix}
1 & 0 & 0 & 0 & 1 \\
1 & 0 & 0 & 1 & 1 \\
0 & 1 & 0 & 1 & 1 \\
0 & 0 & 1 & 1 & 1
\end{bmatrix}.$$
From the matrix, we see that the new graph
is still a chordal bipartite graph and has no
induced $(K_{3,3}\setminus e)$.  The graph
$H'$ made from the matrix $M'$ is given
in Figure \ref{graphHprime}. 
\begin{figure}[h!]
\begin{tikzpicture}[scale=0.27]

\draw (0,6) -- (30,9);



\draw (24,3) -- (30,6);
\draw (24,0) -- (30,3);
\draw (24,0) -- (30,6);



\fill[fill=white,draw=black] (0,6) circle (.2)
node[label=above:$e_{2,1}$] {};
\fill[fill=white,draw=black] (0,9) circle (.2)
node[label=above:$e_{1,1}$] {};

\fill[fill=white,draw=black] (8,3) circle (.2)
node[label=above:$e_{3,2}$] {};

\fill[fill=white,draw=black] (16,0) circle (.2)
node[label=below:$e_{4,3}$] {};

\fill[fill=white,draw=black] (24,0) circle (.2)
node[label=below:$e_{4,4}$] {};
\fill[fill=white,draw=black] (24,3) circle (.2)
node[label=above:$e_{3,4}$] {};
\fill[fill=white,draw=black] (24,6) circle (.2)
node[label=above:$e_{2,4}$] {};

\fill[fill=white,draw=black] (30,0) circle (.2)
node[label=below:$e_{4,5}$] {};
\fill[fill=white,draw=black] (30,3) circle (.2)
node[label=above:$e_{3,5}$] {};
\fill[fill=white,draw=black] (30,6) circle (.2)
node[label=above:$e_{2,5}$] {};
\fill[fill=white,draw=black] (30,9) circle (.2)
node[label=above:$e_{1,5}$] {};;
\end{tikzpicture}
\caption{The new graph $H'$ constructed from $M'$.  This graph is a $G(2,2)^\circ \sqcup G(3,2)^\circ$ plus five isolated vertices}\label{graphHprime}
\end{figure}
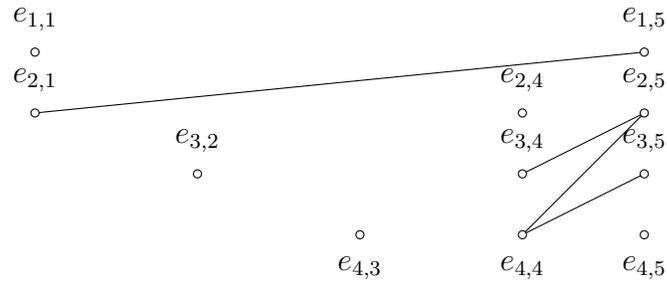
 Observe
that $H'$ is the disjoint union of a 
$G(2,2)^\circ$ and $G(3,2)^\circ$ and some
isolated vertices. Thus
$${\rm reg}(R/I_G) = \min\{2-1,2-1\} + \min\{3-1,2-1\} = 2.$$
\end{example}

\end{document}